\documentclass[11pt]{amsart}

\usepackage{graphicx} 
\usepackage{setspace} 
\usepackage{amsfonts} 
\usepackage[margin=1in]{geometry}
\usepackage{amsthm}
\usepackage{bbm}
\usepackage{amssymb,latexsym,amsmath}
\newtheorem{theorem}{Theorem}[section]

\newtheorem{proposition}[theorem]{Proposition}

\newtheorem{lemma}[theorem]{Lemma}

\newtheorem{example}[theorem]{Example}
\numberwithin{equation}{section} \setlength{\textheight}{9.25 in}
\pagestyle{plain}
\begin{document}
%
\title{Tight frames, partial isometries, \\ and signal reconstruction}

\author{Enrico Au-Yeung \\
Department of Mathematics \\
University of British Columbia\\
Vancouver, B.C.  \ Canada, V6T 1Z2 \\
\\Somantika Datta\\
Department of Mathematics\\
University of Idaho\\
Moscow, ID 83844-1103, USA\\}

\maketitle
\date{}

\doublespacing

\begin{abstract}
 This article gives a procedure to convert a frame which is not a tight frame into a Parseval  frame for the same space, with the requirement that each element in the resulting Parseval frame can be explicitly written as a linear combination of the elements in the original frame.  Several examples are 
 considered, such as a Fourier frame on a spiral.  The procedure can be applied to the construction of Parseval frames for $L^2(B(0, R)),$ the space of square integrable functions whose domain is the ball of radius $R.$  When a finite number of measurements are used to reconstruct a signal in $L^2(B(0, R)),$  error estimates arising from such approximation are  discussed.
\\
\textbf{Keywords:} Tight frames, partial isometry, polar decomposition, Fourier frames
\end{abstract}

\section{Introduction} \label{Intro}
Earlier work by Benedetto et al. \cite{BPW1, BenWu99, BenWu2_99, BenWu2000} gave the construction of a set of points on a given spiral such that these points give rise to a frame for $L^2(B(0, R)),$ the space of all square integrable functions on the ball centered at the origin and of radius $R.$ This means that given a spiral $A_c,$ the authors in \cite{BPW1, BenWu99, BenWu2_99, BenWu2000} were able to construct a sequence of points $\Lambda$ on this spiral and its interleaves such that every signal $f$ belonging to  $L^2(B(0, R))$ can be written as $\sum_{\lambda \in \Lambda} a_{\lambda}(f) e_{\lambda}$ where $e_{\lambda}(x) = e^{2 \pi i x \cdot \lambda}.$ The incentive of choosing points on a spiral comes from the applicability in MRI (Magnetic Resonance Imaging) where a signal is sampled in the Fourier domain along interleaving spirals, resulting in fast imaging methods. For practical purposes, the reconstruction of signals using such infinite frames entails inverting the frame operator and/or using only finitely many samples. Such numerical issues are mitigated if one can use a tight frame. The possibility of expanding a function as a non-harmonic Fourier series was discovered by Paley and Wiener \cite{PalWie1934}.  For a sequence  $\Lambda$ of real numbers, it is natural to ask whether every band-limited signal with spectrum $E$ can be reconstructed in a stable way from its samples $\{F(\lambda), \lambda \subseteq \Lambda\}$.  Landau \cite{Landau1967} proved a necessary condition  for $\{e^{2 \pi i x \cdot \lambda }, \lambda \in \Lambda\}$ to be a frame for the space of band-limited functions with spectrum $E$ by relating the lower density of $\Lambda$ to the measure of $E$.  There is an extensive literature on the stable reconstruction problem, (see, e.g., \cite{RedYoung1983}, \cite{OlevUlan2008}, \cite{OlevUlan2006}, \cite{KozmaLev2011}, \cite{FeiGroch1992}, \cite{MeyerMatei2010}).   Many of the contributions to this area focus on the theoretical aspect, while the emphasis here is  on explicit construction.

One of the main contributions of this article is to give a procedure to convert a frame which is not a tight frame into a Parseval  frame for the same space, with the requirement that each element in the resulting Parseval frame can be explicitly written as a linear combination of the elements in the original frame.  To be precise, this requirement means that if $\{f_1, f_2, f_3\}$ is the original frame for the Hilbert space $\mathcal{H}$, and $\{g_1, g_2, g_3\}$ is the resulting Parseval frame, then each $g_n$ can be explicitly written as a linear combination of $f_1, f_2, f_3$.
For any function $f \in \mathcal{H}$, one has $f  = \sum_{n=1}^{3} \langle f, g_n \rangle g_n$. Since each $g_n$ is a linear combination of $f_1,f_2,$ and $f_3$, each number $\langle f, g_n \rangle$ can be calculated from the three numbers $\langle f, f_1 \rangle, \langle f,f_2\rangle, \langle f,f_3 \rangle$. Hence, from the numbers $\langle f,f_n \rangle$ for $n=1,2,3$, one can recover $f$.  In the reconstruction formula using the Parseval frame, only the measurements obtained from the original frame are needed.  This feature is extremely important, especially in the aforementioned application to MRI, when the measurements from the original frame are the only available measurements.  The procedure explained in this article applies to other frames, and not just to Fourier frames, but motivated by applications to medical imaging as in MRI, the focus here is only on spiral sampling with Fourier frames.

In \cite{FPT2002}, Frank, Paulsen, and Tiballi obtain a Parseval frame from a given frame that spans the same subspace as the original frame and is closest to it in some sense, which they call \emph{symmetric approximation}. The approach in \cite{FPT2002} is to use the polar decomposition of the synthesis operator of the original frame. This idea inspires the method developed  in the present work to obtain Parseval frames for the spiral sampling case.

In practice, one cannot use an infinite frame as obtained in \cite{BPW1, BenWu99, BenWu2_99, BenWu2000} and only a finite number of samples or measurements have to be used in order to reconstruct a signal. This means that one has to study features of a signal from a finite sum approximation of the original. It is desirable that the error introduced by such an approximation is minimized. Such approximation error is also studied in the present work.

The paper is divided as follows. After setting the notation and introducing some background work in the rest of Section \ref{Intro}, Section \ref{Sec:ParsevalFrames} provides an algorithm for constructing a Parseval frame from a given finite frame such that the resulting Parseval frame vectors are linear combinations of the frame vectors of the given frame. Several explicit examples are also discussed. It is also shown in Section \ref{Sec:ParsevalFrames}, see Proposition \ref{Prop:1}, that by considering frames of subspaces of the underlying Hilbert space $\mathcal{H},$ different Parseval frames can be obtained from a given frame.  A comparison of these different Parseval frames is also done in Section \ref{Sec:ParsevalFrames}. In Section \ref{approx_err_est}, reconstruction of signals in infinite dimensional spaces is studied by considering finite sums and estimates of the resulting approximation error are given. In Section \ref{discussion}, it is shown that in the infinite dimensional setting it is not always possible to find an orthogonal set of vectors that is the symmetric approximation of a given set of vectors.  Finally, some concluding remarks are given in Section \ref{conclusion}.


\subsection{Notation and preliminaries}\label{subsec:prelim}
Let $\mathbb{R}^d$ be the $d$-dimensional Euclidean space, and let
$\widehat{\mathbb{R}}^d$ denote $\mathbb{R}^d$ when it is
considered as the domain of the Fourier transforms of signals
defined on $\mathbb{R}^d.$ $L^2(\widehat{\mathbb{R}}^d)$ is the
space of square integrable functions $\phi$ on
$\widehat{\mathbb{R}}^d,$ i.e.,
$$||\phi ||_{L^2(\widehat{\mathbb{R}}^d)} = \left(\int_{\widehat{\mathbb{R}}^d}
|\phi(\gamma)|^2 \textrm{d}\gamma \right)^{1/2} < \infty,$$ $\phi^{\vee}$ is
the inverse Fourier transform of $\phi$ defined as
$$\phi^{\vee}(x) = \int_{\widehat{\mathbb{R}}^d} \phi(\gamma) e^{2\pi i x\cdot \gamma}\textrm{d}\gamma,$$
and $supp \ \phi^{\vee}$ denotes the support of $\phi^{\vee}.$ Let
$E \subseteq \widehat{\mathbb{R}}^d$ be closed. The
\emph{Paley-Wiener} space $PW_E$ is
$$PW_E = \{\phi \in L^2(\widehat{\mathbb{R}}^d): supp \ \phi^{\vee} \subseteq E\}.$$
The Fourier transform of a function $f$ is denoted by $\widehat{f}.$
\rm Let $\mathcal{H}$ be a separable Hilbert space. A sequence $\{f_n: n \in
\mathbb{Z}^d\} \subseteq \mathcal{H}$ is a \emph{frame} for $\mathcal{H}$ if there
exist constants $0< A \leq B < \infty$ such that
$$\forall y \in \mathcal{H}, \quad A||y||^2 \leq \sum_n |\langle y, f_n \rangle|^2 \leq B||y||^2.$$
The constants $A$ and $B$ are called the lower and upper frame bounds, respectively. If $A = B,$ the frame is said to be \emph{tight} and if $A = B = 1,$ the frame is called a \emph{Parseval} frame. Let $\{f_n\}$ be a frame for $\mathcal{H}$. The \emph{synthesis operator} is the linear mapping $T: \ell_2 \to \mathcal{H}$ given by $T(\{c_i\}) = \sum_k c_k f_k.$ The frame operator $S: \mathcal{H} \to \mathcal{H}$ is $TT^*$ and is given by $$\forall y \in \mathcal{H}, \quad S(y) = \sum_n \langle y, f_n \rangle f_n.$$ For every $y \in \mathcal{H},$
\begin{equation}\label{eq:recon}
y = \sum_n \langle y, S^{-1}f_n \rangle f_n = \sum_n \langle y, f_n \rangle S^{-1}f_n.
\end{equation}
For more on frames, see \cite{Chr03} or \cite{Dau92}.

Let $\Lambda \subseteq \widehat{\mathbb{R}}^d$ be a sequence and
let $E \subset \mathbb{R}^d$ have finite Lebesgue measure. By the
Parseval Formula, the following are equivalent (\cite{BenWu2_99, BenWu2000}).
\begin{itemize}
\item[(i)] $\{e_{\lambda}: \lambda \in \Lambda\}$ is a frame for
$L^2(E).$ \item[(ii)] There exist $0 < A \leqslant B < \infty$
such that
$$A ||\phi||^2_2 \leqslant \sum_{\lambda \in \Lambda}|\phi(\lambda)|^2 \leqslant B ||\phi||_2^2,$$
for all $\phi$ in $PW_E.$ In this case, we say that $\{e_{\lambda}: \lambda \in \Lambda\}$  is a
\emph{Fourier frame} for $PW_E.$
\end{itemize}
A set $\Lambda$ is \emph{uniformly discrete} if there exists $r > 0$ such that
$$\forall \lambda, \gamma \in \Lambda, \quad |\lambda - \gamma|\geq r,$$ where $|\lambda - \gamma|$ is the Euclidean distance between $\lambda$ and $\gamma.$

If for two frames $\{f_i\}_{i \in \mathbb{N}}$ and $\{g_i\}_{i \in \mathbb{N}}$ of two Hilbert subspaces $\mathcal{K}$ and $\mathcal{L}$ of $\mathcal{H},$ respectively, there exists an invertible bounded linear operator $T: \mathcal{K} \to \mathcal{L}$ such that $T(f_i) = g_i$ for every index $i,$ then these two frames are said to be \emph{weakly similar} \cite{FPT2002}. A Parseval frame $\{\nu_i\}_{i=1}^n$ in a finite dimensional Hilbert subspace $\mathcal{L} \subseteq \mathcal{H}$ is said to be a \emph{symmetric approximation} of a finite frame $\{f_i\}_{i = 1}^n$ in a Hilbert subspace $\mathcal{K} \subseteq \mathcal{H}$ if the frames $\{f_i\}_{i = 1}^n$ and $\{\nu_i\}_{i = 1}^n$ are weakly similar and the inequality
$$\sum_{j = 1}^n \|\mu_j - f_j\|^2 \geq \sum_{j=1}^n \|\nu_j - f_j\|^2$$ is valid for all Parseval frames $\{\mu_i\}_{i = 1}^n$ in Hilbert subspaces of $\mathcal{H}$ that are weakly similar to $\{f_i\}_{i = 1}^n $ \cite{FPT2002}. If $\mathcal{K} = \mathcal{L},$ the frames are called \emph{similar}.

When an $n$ by $m$ matrix $W$ acts on a sequence of elements $\{f_1, f_2, \ldots, f_m\}$, this action is  denoted by $\{e_1, e_2, \ldots,  e_n\}= W \cdot \{f_1, f_2, \ldots, f_m\}$, or in matrix notation,

\begin{equation*} \left[  \begin{array}{c} e_1 \\ e_2 \\ \vdots \\ e_n  \end{array}  \right]  =
    \left[ \begin{array}{ccccc}
             w_{11} & w_{12} & \cdots & w_{1m} \\
             w_{21} & w_{22} & \cdots & w_{2m}  \\
             \vdots & \vdots & \vdots & \vdots \\
             w_{n1}\ & w_{n2} & \cdots & w_{nm}
             \end{array}  \right]
      \left[ \begin{array}{c} f_1 \\ f_2 \\ \vdots \\ f_m \end{array} \right],
\end{equation*}
to denote, for a fixed $i,$
\[e_i = \sum_{j=1}^m w_{ij}f_j. \]
The space $C^{(k)}$ consists of all functions which have derivatives of order up to $k,$ $k\geqslant 2.$ For some positive integer $k,$ $f^{(k)}$ denotes the $k$th derivative of $f.$
The open ball of radius $R$ is denoted by $B(0, R)$. For a given set $E,$ the complement of $E$ is denoted by $E^c.$
\subsection{Background}
The following theorem \cite{BPW1, BenWu99, BenWu2_99, BenWu2000} is based on a deep result of Beurling \cite{Beur66}.
\begin{theorem}[Beurling Covering Theorem]
Let $\Lambda \subseteq \widehat{\mathbb{R}}^d$ be uniformly discrete, and define $\rho = \sup_{\mu \in \widehat{\mathbb{R}}^d} \mathrm{dist}(\mu, \Lambda)$ where $\mathrm{dist}(\mu, \Lambda)$ is the Euclidean distance between the point $\mu$ and the set $\Lambda.$ If $R\rho < 1/4,$ then $\{e^{2 \pi i x \cdot \lambda} \colon \lambda \in \Lambda\}$  is a Fourier frame for $\mathrm{PW}_{\overline{B(0, R)}}.$
\end{theorem}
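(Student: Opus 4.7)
The plan is to verify the frame bounds directly. By the Parseval-formula equivalence recorded in Subsection~\ref{subsec:prelim}, it suffices to produce constants $0 < A \leq B < \infty$ with
$$A \|\phi\|_2^2 \leq \sum_{\lambda \in \Lambda} |\phi(\lambda)|^2 \leq B \|\phi\|_2^2 \qquad \text{for every } \phi \in \mathrm{PW}_{\overline{B(0,R)}}.$$
Only the lower bound genuinely exploits the covering hypothesis $R\rho < 1/4$; the upper bound is a soft consequence of uniform discreteness.

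For the upper bound I would invoke a Plancherel--Polya style local mean-value inequality. Let $r>0$ be the separation constant of $\Lambda$, so the balls $\{B(\lambda,r/2)\}_{\lambda \in \Lambda}$ are pairwise disjoint. Because $\phi \in \mathrm{PW}_{\overline{B(0,R)}}$ extends to an entire function of exponential type $R$, one has a pointwise reproducing estimate
$$|\phi(\lambda)|^2 \leq C(R,r,d)\, \|\phi\|_{L^2(B(\lambda, r/2))}^2,$$
which can be derived from a smooth cutoff of the reproducing kernel of $\mathrm{PW}_{\overline{B(0,R)}}$ or directly from Bernstein's inequality combined with a Sobolev embedding. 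Summing over $\lambda$ and using disjointness yields $\sum_\lambda |\phi(\lambda)|^2 \leq C(R,r,d)\|\phi\|_2^2$, so $B$ depends only on $R$, $d$, and the separation.

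For the lower bound, the strategy is to compare $\phi$ to its nearest-neighbor quantization. Partition $\widehat{\mathbb{R}}^d$ into Voronoi cells $\{V_\lambda\}_{\lambda \in \Lambda}$, each of diameter at most $2\rho$ and volume at most $c_d \rho^d$, and set $F(\mu) = \phi(\lambda)$ for $\mu \in V_\lambda$. Using the band-limited representation
$$\phi(\mu) - \phi(\lambda) = \int_{|x|\leq R} \phi^{\vee}(x)\, e^{-2\pi i \mu \cdot x} \bigl(1 - e^{2\pi i (\mu - \lambda)\cdot x}\bigr)\,dx,$$
together with the fact that for $\mu \in V_\lambda$ and $|x| \leq R$ the phase $|2\pi(\mu-\lambda)\cdot x|$ stays below $2\pi R\rho < \pi/2$, I would aim for an $L^2$ oscillation estimate of the form $\|\phi - F\|_2 \leq \kappa(R\rho)\|\phi\|_2$ with $\kappa(1/4) < 1$. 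Granting this, the reverse triangle inequality gives $\|F\|_2 \geq (1-\kappa)\|\phi\|_2$, and since $\|F\|_2^2 = \sum_\lambda |\phi(\lambda)|^2 |V_\lambda| \leq c_d \rho^d \sum_\lambda |\phi(\lambda)|^2$, we obtain the lower frame bound $A = (1-\kappa)^2/(c_d \rho^d)$.

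The main obstacle is precisely the $L^2$ estimate $\|\phi - F\|_2 \leq \kappa(R\rho)\|\phi\|_2$ with sharp dependence producing $\kappa < 1$ at the threshold $R\rho < 1/4$. A naive route via the fundamental theorem of calculus and the Bernstein estimate $\|\nabla \phi\|_2 \leq 2\pi R \|\phi\|_2$, followed by a change of variables $\xi = \lambda + t(\mu - \lambda)$ in each cell, produces a divergent factor $\int_0^1 t^{-d}\,dt$ for $d \geq 1$; so a pointwise Taylor-type bound is too lossy. Circumventing this, and realizing the optimal constant $1/4$, is exactly where the deep result of Beurling referenced in the statement enters: one invokes entire functions of exponential type that interpolate on $\Lambda$ to get a genuinely Fourier-side oscillation estimate, paralleling Kadec's $1/4$-theorem in the one-dimensional case.
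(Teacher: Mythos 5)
The paper does not prove this theorem: it is quoted from Beurling's work \cite{Beur66} (via \cite{BPW1, BenWu99, BenWu2_99, BenWu2000}) precisely because the proof is deep, so there is no in-paper argument to compare against. Judged on its own terms, your proposal is not a proof but a reduction of the statement to itself. The upper-bound half is fine and standard: the Plancherel--P\'olya local mean-value inequality $|\phi(\lambda)|^2 \leq C(R,r,d)\,\|\phi\|_{L^2(B(\lambda,r/2))}^2$ for functions of exponential type, summed over the disjoint balls furnished by uniform discreteness, does give $B$. But the lower frame bound is the entire content of the theorem, and there your argument stops exactly where the difficulty begins: you correctly diagnose that the fundamental-theorem-of-calculus route produces the divergent factor $\int_0^1 t^{-d}\,dt$, and you then close the gap by ``invoking entire functions of exponential type that interpolate on $\Lambda$'' --- i.e., by invoking Beurling's theorem. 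That is circular.

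There is also a quantitative obstruction showing that your perturbation scheme cannot work even if the analytic difficulties were resolved. The multiplier $1 - e^{2\pi i(\mu-\lambda)\cdot x}$ has modulus $2|\sin(\pi(\mu-\lambda)\cdot x)| \leq 2\pi R\rho$ on the relevant range, so the best oscillation estimate this comparison can conceivably yield is $\|\phi - F\|_2 \leq 2\pi R\rho\,\|\phi\|_2$, i.e.\ $\kappa = 2\pi R\rho$. At the hypothesis' threshold $R\rho = 1/4$ this gives $\kappa = \pi/2 > 1$, so the reverse triangle inequality yields nothing; your scheme could at best prove the theorem under the strictly stronger hypothesis $R\rho < 1/(2\pi)$. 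Reaching the constant $1/4$ is not a sharpening of this nearest-neighbor quantization argument but requires a genuinely different mechanism (Beurling's balayage of measures and a duality/extremal-function argument, the multidimensional analogue of what makes Kadec's $1/4$-theorem hard). If your goal is only to establish that \emph{some} covering density suffices --- which is all the weaker constant $1/(2\pi)$ would give --- you should say so and supply a correct $L^2$ version of the oscillation estimate (e.g.\ via a maximal-function or Bernstein-in-$L^2$ argument that avoids the $t^{-d}$ blowup); as written, neither the sharp statement nor a weakened one is actually proved.
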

In \cite{BPW1, BenWu99, BenWu2_99, BenWu2000} the authors have used the Beurling Covering Theorem to give an explicit construction of Fourier frames from points that lie on a spiral. In particular, the following result can be found in  \cite{BenWu99}.
\begin{example} \label{SpiralFrames}
Fix $c>0.$ In $\widehat{\mathbb{R}}^2,$ consider the spiral $$A_c = \{c \theta \cos 2\pi \theta , c \theta \sin 2\pi \theta : \theta \geq 0\}.$$ For $R$ and $\delta$ satisfying $Rc < 1/2$ and $(\frac c2 + \delta) R < 1/4,$ one chooses a uniformly discrete set of points $\Lambda$ such that the curve distance between any two consecutive points is less than $2 \delta,$ and beginning within $2\delta$ of the origin. Then $\Lambda$ satisfies the Beurling Covering Theorem and hence $\{e^{2 \pi i x \cdot \lambda} \colon \lambda \in \Lambda\}$ is a Fourier frame for $\mathrm{PW}_{\overline{B(0, R)}}.$
\end{example}
The synthesis operator $T$ defined earlier is bounded and has a natural polar decomposition $T = W|T|,$ where $W$ is a partial isometry from $\ell_2$ into $\mathcal{H}.$ To obtain a symmetric approximation of a given frame, the following has been shown in \cite{FPT2002}.
\begin{theorem}\label{thm_Frank_Paulsen}
Let $\{\mu_i\}_{i = 1}^n$ be a Parseval frame in a Hilbert subspace $\mathcal{L} \subseteq \mathcal{H}$ and let $\{f_i\}_{i = 1}^n$ be a frame in a Hilbert subspace $\mathcal{K}\subseteq \mathcal{H}$ such that both these frames are weakly similar. Letting the standard orthonormal basis for $\mathbb{C}^n$ be denoted by $\{e_i\}_{i=1}^n,$ the following inequality
$$\sum_{j = 1}^n \|\mu_j - f_j\|^2 \geq \sum_{j = 1}^n \|W(e_j) - f_j\|^2$$
holds. Equality appears if and only if $\mu_j = W(e_j)$ for $j = 1, \ldots, n.$ (Consequently, the symmetric approximation of a frame $\{f_i\}_{i = 1}^n$ in a finite dimensional Hilbert space $\mathcal{K} \subseteq \mathcal{H}$ is a Parseval frame spanning the same Hilbert subspace $\mathcal{L} \equiv \mathcal{K}$ of $\mathcal{H}$ and being similar to $\{f_i\}_{i = 1}^n.$)
\end{theorem}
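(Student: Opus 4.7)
The plan is to reformulate the problem in terms of synthesis operators on $\mathbb{C}^n$ and read both sides of the inequality as squared Hilbert--Schmidt distances, after which the estimate collapses to a single trace inequality. Let $T\colon\mathbb{C}^n\to\mathcal{K}$ be the synthesis operator of $\{f_j\}$, so $Te_j=f_j$ and $T=W|T|$ is the given polar decomposition, with $W$ a partial isometry having initial space $(\ker T)^\perp$ and final space $\mathcal{K}$. Let $V\colon\mathbb{C}^n\to\mathcal{L}$ be the synthesis operator of $\{\mu_j\}$. Because $\{\mu_j\}$ is Parseval, $VV^*=I_{\mathcal{L}}$, so $V$ is a coisometry, and $V^*V$ is an orthogonal projection on $\mathbb{C}^n$. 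Weak similarity supplies an invertible $A\colon\mathcal{K}\to\mathcal{L}$ with $Af_j=\mu_j$, whence $V=AT$ and $\ker V=\ker T$; therefore $V^*V=W^*W=P$, where $P$ is the projection of $\mathbb{C}^n$ onto $(\ker T)^\perp$.

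With these operators in hand, $\sum_j\|\mu_j-f_j\|^2=\|V-T\|_{HS}^2$ and $\sum_j\|We_j-f_j\|^2=\|W-T\|_{HS}^2$. Expanding,
\[
\sum_j\|\mu_j-f_j\|^2-\sum_j\|We_j-f_j\|^2 = \bigl(\|V\|_{HS}^2-\|W\|_{HS}^2\bigr)-2\,\mathrm{Re}\,\mathrm{tr}\bigl(V^*T-W^*T\bigr).
\]
The first parenthesis vanishes, since $\|V\|_{HS}^2=\mathrm{tr}(V^*V)=\mathrm{tr}(P)=\|W\|_{HS}^2$. Using $T=W|T|$, one has $W^*T=W^*W|T|=P|T|=|T|$, so $\mathrm{tr}(W^*T)=\mathrm{tr}(|T|)$, while $V^*T=V^*W|T|$. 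The inequality is therefore equivalent to
\[
\mathrm{Re}\,\mathrm{tr}\bigl((I-V^*W)|T|\bigr)\geq 0.
\]

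For the key estimate, set $B=V^*W$. Since $V$ is a coisometry and $W$ is a partial isometry, $\|B\|\leq\|V^*\|\,\|W\|\leq 1$. For any $x\in\mathbb{C}^n$, $\mathrm{Re}\langle Bx,x\rangle\leq|\langle Bx,x\rangle|\leq\|x\|^2$, so $\mathrm{Re}(B):=\tfrac12(B+B^*)\leq I$, i.e.\ $I-\mathrm{Re}(B)\geq 0$. Factoring $|T|=|T|^{1/2}|T|^{1/2}$ and using cyclicity of the trace together with the identity $\mathrm{Re}\,\mathrm{tr}(X)=\mathrm{tr}\!\bigl(\tfrac12(X+X^*)\bigr)$,
\[
\mathrm{Re}\,\mathrm{tr}\bigl((I-B)|T|\bigr) = \mathrm{tr}\bigl(|T|^{1/2}(I-\mathrm{Re}(B))|T|^{1/2}\bigr)\geq 0,
\]
since the operator inside is positive semidefinite. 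This proves the inequality.

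For the equality case, equality forces the positive semidefinite operator $|T|^{1/2}(I-\mathrm{Re}(B))|T|^{1/2}$ to have zero trace, hence to vanish, so $(I-\mathrm{Re}(B))x=0$ on the range of $|T|^{1/2}$, which is $(\ker T)^\perp$. Combined with $\|B\|\leq 1$, the Cauchy--Schwarz bound $\mathrm{Re}\langle Bx,x\rangle\leq\|Bx\|\|x\|\leq\|x\|^2$ together with $\mathrm{Re}\langle Bx,x\rangle=\|x\|^2$ forces $Bx=x$ on $(\ker T)^\perp$. To finish I would unpack $V^*Wx=x$: the chain $\|x\|=\|V^*Wx\|\leq\|Wx\|=\|x\|$ has to be saturated, and since $V^*$ is isometric on $\mathcal{L}$ and contractive on $\mathcal{L}^\perp$ (viewing $V$ as landing in $\mathcal{H}$), this forces $Wx\in\mathcal{L}$ and $VV^*Wx=Wx$, giving $Vx=V(V^*Wx)=Wx$. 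Both $V$ and $W$ vanish on $\ker T$, hence $V=W$ and $\mu_j=We_j$; the converse is immediate. I expect this final equality chase to be the most delicate part: the inequality itself falls out of a one-line positivity argument, but extracting $V=W$ from $V^*Wx=x$ on $(\ker T)^\perp$ requires careful bookkeeping because $V$ and $W$ land in different subspaces $\mathcal{L}$ and $\mathcal{K}$ that are not assumed to coincide a priori.
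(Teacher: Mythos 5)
The paper does not actually prove this statement: Theorem \ref{thm_Frank_Paulsen} is quoted verbatim from Frank--Paulsen--Tiballi \cite{FPT2002} with no argument supplied, so there is no in-paper proof to compare against. Your proposal is a correct, self-contained proof, and it is essentially the standard argument behind the symmetric-approximation theorem: recast both sums as squared Hilbert--Schmidt distances $\|V-T\|_{HS}^2$ and $\|W-T\|_{HS}^2$, observe that the quadratic terms cancel because $V^*V=W^*W$ is the projection onto $(\ker T)^\perp$ (this is the one place where weak similarity is genuinely used, via $V=AT$ and $\ker V=\ker T$; you identify this correctly), and reduce the inequality to $\mathrm{tr}\bigl(|T|^{1/2}(I-\mathrm{Re}(V^*W))|T|^{1/2}\bigr)\geq 0$, which holds since $\|V^*W\|\leq 1$. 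The equality analysis is also sound: vanishing of the trace of a positive semidefinite operator forces the operator to vanish, hence $\mathrm{Re}(B)x=x$ on $\mathrm{ran}\,|T|^{1/2}=(\ker T)^\perp$ (an identification that is automatic here because everything is finite-dimensional, so $|T|^{1/2}$ has closed range equal to $(\ker|T|)^\perp$), the saturated Cauchy--Schwarz chain upgrades this to $Bx=x$, and the final step correctly exploits that $V^*$ is isometric on $\mathcal{L}$ and annihilates $\mathcal{L}^\perp$ to conclude $Wx\in\mathcal{L}$ and $Vx=Wx$, whence $V=W$ and $\mu_j=W(e_j)$. I see no gaps.
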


The operator $W$ in Theorem \ref{thm_Frank_Paulsen} is a partial isometry coming from the polar decomposition of the synthesis operator. A related result, which is a corollary of Naimark's Theorem, can be found in \cite{HanLarson2000} and is stated below in Theorem \ref{theorem-partial-isometry}. The proof is straightforward and is included here.

\begin{theorem}\label{theorem-partial-isometry}
Let $\mathcal{H}$ be an $n$-dimensional Hilbert space and $K \supseteq \mathcal{H}$ be such that the dimension of $K$ is $m.$ Let $\{e_1, \ldots, e_m\}$ be an orthonormal basis for $K.$ Let $W$ be a partial isometry $W: K \to \mathcal{H}.$ Then $\{W e_i\}_{i = 1}^m$ is a Parseval frame for $\mathcal{H}$.
\end{theorem}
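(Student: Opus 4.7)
The plan is to translate the Parseval frame identity for $\{We_i\}_{i=1}^m$ into an isometry statement about $W^\ast$, and then read it off from Parseval's identity for the orthonormal basis $\{e_i\}_{i=1}^m$ of $K$. For the conclusion to be meaningful, the partial isometry $W$ must have final space equal to all of $\mathcal{H}$ (otherwise its range, and hence the span of $\{We_i\}$, is a proper subspace of $\mathcal{H}$ and the family cannot be a frame for $\mathcal{H}$). I would open the proof by noting this: a partial isometry $W:K\to\mathcal{H}$ with final space $\mathcal{H}$ is characterized by the condition that $W^\ast:\mathcal{H}\to K$ is a (genuine) isometry onto the initial space of $W$.

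Assuming that, I would simply compute, for an arbitrary $y\in\mathcal{H}$,
\[
\sum_{i=1}^m |\langle y, We_i\rangle|^2
= \sum_{i=1}^m |\langle W^\ast y, e_i\rangle|^2
= \|W^\ast y\|^2
= \|y\|^2,
\]
where the second equality is Parseval's identity for the orthonormal basis $\{e_i\}_{i=1}^m$ of $K$, and the third is the isometry property of $W^\ast$ on $\mathcal{H}$. This simultaneously shows $\{We_i\}$ satisfies both frame inequalities with $A=B=1$, i.e., it is a Parseval frame.

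The only step that requires any care is the first one, namely justifying that $W^\ast$ is an isometry on $\mathcal{H}$. This follows from the standard fact that a partial isometry $W$ satisfies $W^\ast W = P_{\mathrm{initial}}$ and $W W^\ast = P_{\mathrm{final}}$; if the final space is $\mathcal{H}$, then $WW^\ast = I_\mathcal{H}$, so $\|W^\ast y\|^2 = \langle WW^\ast y, y\rangle = \|y\|^2$ for every $y\in\mathcal{H}$. No dimension counting or finite-dimensional reduction is needed beyond this; the hypotheses on $\dim K = m$ and $\dim\mathcal{H}=n$ just ensure the frame is finite and of size $m$. The main conceptual point—really the only obstacle—is recognizing that ``partial isometry $W:K\to\mathcal{H}$'' in the statement should be read with the tacit assumption that its range fills $\mathcal{H}$, which I would flag explicitly at the start of the proof.
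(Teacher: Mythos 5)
Your proof is correct and is essentially the same computation as the paper's: both reduce the frame sum to $\|W^\ast y\|^2$ via $\langle y, We_i\rangle = \langle W^\ast y, e_i\rangle$ and Parseval's identity in $K$, and both conclude using $WW^\ast = I_{\mathcal{H}}$. Your explicit remark that the partial isometry must have final space equal to all of $\mathcal{H}$ is a point the paper's proof uses tacitly in the step $\langle f,f\rangle = \langle WW^\ast f, f\rangle$, so flagging it is a small improvement rather than a deviation.
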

\begin{proof}
Let $f \in \mathcal{H}$ and let  $\ g \in K$ such that  $W^*f = g.$
\begin{eqnarray*}
\|f\|^2 &=& \langle f, f \rangle = \langle WW^*f, f \rangle \quad  (\textrm{since $W$ is a partial isometry})\\
&=& \langle W^*f, W^*f \rangle \\
&=& \langle g, g \rangle =\|g\|^2 = \sum_{i = 1}^m |\langle g, e_i\rangle |^2 \\
&=& \sum_{i = 1}^m |\langle W^*f, e_i\rangle|^2 = \sum_{i = 1}^m |\langle f, We_i\rangle|^2.
\end{eqnarray*}
\end{proof}
%

%
%
\section{Parseval frames from a finite Fourier frame on a spiral}\label{Sec:ParsevalFrames}

In this section an explicit construction is given for creating a Parseval frame that is the symmetric approximation of a given frame for a finite dimensional Hilbert space $\mathcal{H}.$ As mentioned in Section \ref{Intro}, it is required that each element in the resulting Parseval frame can be expressed as a linear combination of the original frame.
Several examples are discussed for the purpose of illustration.
When dealing with finite dimensional Hilbert spaces, the synthesis operator and the associated partial isometry can be thought of as matrices. One should note that the entries in these matrices depend on the choice of the orthonormal basis (ONB) of the Hilbert space. For example, the columns of the matrix of the synthesis operator are the coefficients of the frame vectors with respect to the chosen ONB. However, the Parseval frame that is the symmetric approximation of the given frame is independent of the choice of the ONB. This seems natural and we state this as a lemma below and provide its proof.
\begin{lemma} \label{lemma_ONB_ParsevalFrame}
Let $\mathcal{H}$ be a Hilbert space of dimension $n$ and let $X = \{f_1, \ldots, f_m\}$ be a frame for $\mathcal{H}.$ Let $\{u_1, \ldots, u_n\}$ and $\{v_1, \ldots, v_n\}$ be two orthonormal bases for $\mathcal{H}.$ Let the synthesis operator of $X$ with respect of these two orthonormal bases be $T_1$ and $T_2,$ respectively. Then the polar decomposition of $T_1$ and $T_2$ gives the same Parseval frame.
\end{lemma}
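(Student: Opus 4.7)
The plan is to exploit the fact that the synthesis operator is intrinsically a linear map $T\colon \ell_2^{m}\to\mathcal H$ sending the standard basis vector $e_i$ to $f_i$, and this map has nothing to do with any ONB chosen in $\mathcal H$. The matrices $T_1$ and $T_2$ are merely two coordinate representations of the same map, so it should not be surprising that the Parseval frame obtained from the polar decomposition is the same; the work is to track this through the change of coordinates.

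First I would make the relationship between $T_1$ and $T_2$ explicit. Let $U$ be the unitary matrix whose $i$-th column is the coordinate vector of $v_i$ in the ONB $\{u_1,\ldots,u_n\}$. Since the $i$-th column of $T_1$ (respectively $T_2$) records the $u$-coordinates (respectively $v$-coordinates) of $f_i$, a vector whose $u$-coordinates are $a$ has $v$-coordinates $U^{*}a$, and therefore
\[
T_2 = U^{*}T_1.
\]
Next I would compute the positive part. Since $U$ is unitary,
\[
T_2^{*}T_2 = T_1^{*}UU^{*}T_1 = T_1^{*}T_1,
\]
so $|T_2|=|T_1|$. By uniqueness of the polar decomposition (with the standard kernel condition on the partial isometry), the identity $T_2 = (U^{*}W_1)|T_1|$ forces $W_2 = U^{*}W_1$.

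Finally I would pass back to vectors in $\mathcal H$. The Parseval frame coming from $T_1$ consists of vectors $g_i^{(1)}\in\mathcal H$ whose $u$-coordinates form the $i$-th column of $W_1$; the Parseval frame coming from $T_2$ consists of vectors $g_i^{(2)}\in\mathcal H$ whose $v$-coordinates form the $i$-th column of $W_2 = U^{*}W_1$. Converting $v$-coordinates back to $u$-coordinates multiplies by $U$, producing the $i$-th column of $UW_2 = UU^{*}W_1 = W_1$. Hence $g_i^{(1)}=g_i^{(2)}$ for each $i$, proving the lemma.

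The only real obstacle is bookkeeping: being consistent about whether columns encode coordinates or values, and whether the change of basis is implemented by $U$ or $U^{*}$. Once the convention is fixed, the argument is essentially the observation that polar decomposition is an intrinsic operation on the synthesis map $T\colon\ell_2^{m}\to\mathcal H$, and that $\{W e_i\}$ is an intrinsic sequence of vectors in $\mathcal H$.
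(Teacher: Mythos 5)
Your proposal is correct and follows essentially the same route as the paper's proof: express the two matrices as $T_2 = U^{*}T_1$ for a unitary change-of-basis matrix, observe $|T_2|=|T_1|$, invoke uniqueness of the polar decomposition to get $W_2=U^{*}W_1$, and convert coordinates back to vectors in $\mathcal{H}$ to see the two Parseval frames coincide. The only difference is notational (the paper writes the change of basis as $Q^{T}$ acting on row-stacked basis vectors), and your direct computation $T_2^{*}T_2=T_1^{*}T_1$ is a slightly cleaner way to get $|T_2|=|T_1|$.
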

\begin{proof}
Since $\{u_1, \ldots, u_n\}$ and $\{v_1, \ldots, v_n\}$ are two orthonormal bases for the same Hilbert space $\mathcal{H},$ there exists an unitary matrix $Q$ such that
$$
\left[
\begin{array}{c}
u_1 \\
\vdots \\
u_n
\end{array}
\right] = Q
\left[
\begin{array}{c}
v_1 \\
\vdots \\
v_n
\end{array}
\right].
$$
Also, there exist $m \times n$ matrices $M_1$ and $M_2$ such that
$$
\left[
\begin{array}{c}
f_1 \\
\vdots \\
f_m
\end{array}
\right] = M_1
\left[
\begin{array}{c}
u_1 \\
\vdots \\
u_n
\end{array}
\right]
$$
and
$$
\left[
\begin{array}{c}
f_1 \\
\vdots \\
f_m
\end{array}
\right] = M_2
\left[
\begin{array}{c}
v_1 \\
\vdots \\
v_n
\end{array}
\right].
$$
Note that the synthesis operators $T_1$ and $T_2$ are given by
$$T_1 = M_1^T$$ and $$T_2 = M_2^T.$$ The polar decomposition of $T_1$ and $T_2$ gives partial isometries $W_1$ and $W_2,$ respectively. It has to be shown that $
W_1^T\left[
\begin{array}{c}
u_1 \\
\vdots \\
u_n
\end{array}
\right] = W_2^T
\left[
\begin{array}{c}
v_1 \\
\vdots \\
v_n
\end{array}
\right].$ In other words, it has to be shown that $$W_1^T Q = W_2^T$$
or,
$$Q^T W_1 = W_2.$$ Note that $M_1 Q = M_2,$ i.e.,
\begin{equation} \label{eq:1}
T_2 = Q^T T_1 .
\end{equation}
Also,
\begin{equation} \label{eq:2}
T_2 = W_2|T_2|
\end{equation}
Equating the right sides of (\ref{eq:1}) and (\ref{eq:2}) gives
\begin{eqnarray*}
W_2|T_2| &=& Q^T T_1 = Q^T W_1 |T_1| \\
&=& Q^T W_1 |(Q^T)^* T_2| \\
&=& Q^T W_1 |T_2|
\end{eqnarray*}
where the last equality follows from the fact that since $Q^T$ is unitary $|(Q^T)^* T_2| = |T_2|.$ Finally, the uniqueness of the polar decomposition gives $W_2 = Q^T W_1$ as needed.
\end{proof}
The following algorithm gives a way to construct a Parseval frame from a given frame that is the symmetric approximation of the given frame in the sense of Theorem \ref{thm_Frank_Paulsen}.

\paragraph{\textit{Algorithm for constructing a Parseval frame from a given frame:}}
\begin{enumerate}
\item Input: A frame $X = \{f_1, \ldots, f_m\}$ for an $n$-dimensional Hilbert space $\mathcal{H}.$ Output: A Parseval frame for $\mathcal{H}.$
\item Since $X$ is a spanning set for $\mathcal{H},$ $X$ contains a basis $B$ for $\mathcal{H}.$ Apply the Gram-Schmidt process to $B$ and obtain an ONB $\{e_1, \ldots, e_n\}$ for $\mathcal{H}.$
\item Writing each $f_i$ in terms of the ONB, (2) gives the $m$ by $n$ transformation matrix $M$ such that
    $$\left[
    \begin{array}{c}
    f_1 \\
    \vdots \\
    f_m
    \end{array}
    \right] = M \left[
    \begin{array}{c}
    e_1 \\
    \vdots \\
    e_n
    \end{array}
    \right]$$
    Note that the synthesis operator of the frame $X$ is $T = M^T.$
\item Take the polar decomposition of $T.$ This is $T = W|T|,$ where $W$ is a partial isometry.
\item The Parseval frame $G = \{g_1, \ldots, g_m\}$ that spans the Hilbert space $\mathcal{H}$ is given by $W^T .\{e_1, \ldots, e_n\},$ i.e.,
    $$\left[
    \begin{array}{c}
    g_1 \\
    \vdots \\
    g_m
    \end{array}
    \right] = W^T \left[
    \begin{array}{c}
    e_1 \\
    \vdots \\
    e_n
    \end{array}
    \right]$$
\end{enumerate}
Due to the Gram-Schmidt process, each $e_k$ in the ONB can be written as a linear combination of some of the $f_i$s in the frame $X$ and so each element in the Parseval frame $G$ is in fact a linear combination of elements from the original frame $X.$ A signal $f \in \mathcal{H}$ is reconstructed using the Parseval frame $G$ by evaluating
$$f = \sum_{i = 1}^m \langle f, g_i \rangle g_i.$$ The coefficients
$\{\langle f, g_i \rangle\}_{i=1}^m$ do not have to computed separately and can be obtained from the already acquired coefficients $\{\langle f,  f_i \rangle \}_{i=1}^m.$

Next, several examples are discussed. In Example \ref{Example:1} and Example \ref{Example:2} given below, the frame under consideration is on $\widehat{\mathbb{R}}$. Example \ref{Example:3} is for a Fourier frame on a spiral in $\widehat{\mathbb{R}}^2.$ In Example \ref{Example:1} and \ref{Example:2}, the procedure suggested by Theorem \ref{thm_Frank_Paulsen} is modified so that in the final step,  matrix multiplication is replaced by a matrix  acting on a sequence of elements in a Hilbert space.
\begin{example}\label{Example:1}

Let $\{f_1 = e^{2 \pi i \lambda_1 x}, f_2 = e^{2 \pi i \lambda_2 x}, f_3 = e^{2 \pi i \lambda_3 x} \}$ be a frame that spans a subspace of $L^2([-1/2,1/2])$.  Choose $\lambda_1 = 3 + \frac{1}{3}, \lambda_2 = 4 + \frac{1}{4}, \lambda_3 = 5 + \frac{1}{5}.$

 This frame  is used to construct a Parseval frame that spans the same subspace.  Let $\mathcal{H}$ be the span of $\{f_1, f_2, f_3\}$ and let $\{ e_1, e_2, e_3\}$ be an orthonormal basis of $\mathcal{H}$.
 One can construct an orthonormal basis $\{e_1, e_2, e_3\}$ by applying the Gram-Schmidt orthogonalization process to $\{f_1, f_2, f_3\}$.  The resulting orthonormal basis can be written as

\begin{equation*}
\left[  \begin{array}{c} e_1 \\ e_2 \\ e_3  \end{array}  \right]  =
    \left[ \begin{array}{ccccc}
             1 & 0 & 0 \\
             -c_{21} & 1 & 0  \\
             c_{21}\theta - c_{31} & - \theta & 1
             \end{array}  \right]
      \left[ \begin{array}{c} f_1 \\ f_2 \\ f_3 \end{array} \right],
\end{equation*}
where
\[c_{21} = \textrm{sinc}(\lambda_2 - \lambda_1), c_{32} = \textrm{sinc}(\lambda_3 - \lambda_2), c_{31} = \textrm{sinc}(\lambda_3 - \lambda_1), \] and
\[ \textrm{sinc}(x) \equiv \frac{\sin(\pi x)}{\pi x}, \ \theta = \frac{c_{32} - c_{21}c_{31}}{1 - c_{21}^2}.  \]
Then
\begin{eqnarray*}
f_1 &=& e_1, \\
f_2 &=& c_{21} e_1 + e_2, \\
f_3 &=& c_{31} e_1 + \theta e_2 + e_3, \\
\end{eqnarray*}
and the synthesis operator $T$ of the frame $\{f_1, f_2, f_3\}$ can be written in matrix form as
\begin{displaymath}
\left[
\begin{array}{ccc}
1 & c_{21} & c_{31} \\
0 & 1 & \theta \\
0 & 0 & 1
\end{array}
\right].
\end{displaymath}
Next the polar decomposition of the matrix of $T$ is computed, so that $T = W |T|$, where $W$ is a partial isometry and $|T| = (T^{\ast}T)^{1/2}$.  In this case, since $T$ is invertible, $W$ is in fact a unitary matrix.  Finally, let $\{g_1, g_2, g_3\}= W^* \cdot \{e_1, e_2, e_3\}.$ Then $\{g_1, g_2, g_3\}$ is a Parseval frame for $\mathcal{H}.$


Remark:
(1).  In this example, since the original frame is linearly independent and therefore a basis for $\mathcal{H}$, what is obtained as a Parseval frame is in fact an orthonormal basis for $\mathcal{H}.$
\\
(2).  Since each $g_n$ can be written as a linear combination of $f_1,f_2,$ and $f_3$, the Parseval frame constructed indeed spans the same subspace as the original frame.




\end{example}
\begin{example}\label{Example:2}
Let $\lambda_1 = 3 + \frac{1}{3}, \lambda_2 = 4 + \frac{1}{4}, \lambda_3 = 5 + \frac{1}{5}$ and let
$f_1 = e^{2 \pi i \lambda_1 x}, f_2 = e^{2 \pi i \lambda_2 x}, f_3 = e^{2 \pi i \lambda_3 x}, f_4 = f_1 + f_2, f_5 = f_1 + f_3,$ and $f_6 = f_2 + f_3.$ Consider the frame
$\{f_1, f_2, f_3, f_4, f_5, f_6\}$ for a subspace of $L^2([-1/2,1/2])$. Denote this subspace by $\mathcal{H}.$ Starting from the linearly independent set $\{f_1, f_2, f_3\}$ that spans $\mathcal{H},$ one can construct an orthonormal basis $\{e_1, e_2, e_3\}$ for $\mathcal{H}$ as done in Example \ref{Example:1}. From Example \ref{Example:1},
\begin{eqnarray*}
f_1 &=& e_1, \\
f_2 &=& c_{21} e_1 + e_2, \\
f_3 &=& c_{31} e_1 + \theta e_2 + e_3,\\
f_4 &=& f_1 + f_2 = (1 + c_{21})e_1 + e_2, \\
f_5 &=& f_1 + f_3 = (1 + c_{31}) e_1 + \theta e_2 + e_3, \\
f_6 &=& f_2 + f_3 = (c_{21} + c_{31}) e_1 + (1 + \theta)e_2 + e_3,
\end{eqnarray*}
where $c_{21},$ $c_{31},$ and $\theta$ are as defined in Example \ref{Example:1}.
The synthesis operator $T$ has the matrix representation
\begin{displaymath}
\left[
\begin{array}{cccccc}
1 & c_{21} & c_{31} & 1 + c_{21} & 1 + c_{31} & c_{21} + c_{31} \\
0 & 1 & \theta & 1 & \theta & 1 + \theta \\
0 & 0 & 1 & 0 & 1 & 1
\end{array}
\right].
\end{displaymath}
Let the polar decomposition of $T$ be given by $T = W|T|.$   Let $\{g_1, g_2, g_3, g_4, g_5, g_6\}= W^{\ast} \cdot \{e_1, e_2, e_3\}.$
Note that $W^{\ast}$ is a 6 by 3 matrix.  Then it can be shown that $\{g_k \colon 1 \leq k \leq 6\}$ forms a Parseval frame for $\mathcal{H}.$
\end{example}
\begin{example} \label{Example:3}
A Fourier frame of three elements is first constructed using Example \ref{SpiralFrames}. In order to satisfy the conditions of Example \ref{SpiralFrames}, let $c = 1, \ R = 1/4,$ and $\delta = 1/4.$ Three points are then picked on the spiral $A_{c=1} = \{\theta \cos2\pi\theta, \theta \sin 2\pi \theta\}$ that have arc-length between them less than $2 \delta$ and starting within $2 \delta$ from the origin. The $x$ and $y$ coordinates of any point on the spiral are given by
\begin{eqnarray*}
x &=& \theta \cos 2\pi\theta, \\
y &=& \theta \sin 2\pi\theta
\end{eqnarray*}
and therefore the arc-length between any two points $(x_1, y_1)$ and $(x_2, y_2)$ with angles $\theta_1$ and $\theta_2,$ respectively, is
\begin{eqnarray*}
\int_{(x_1, y_1)}^{(x_2, y_2)} \textrm{d}s &=& \int_{\theta_1}^{\theta_2} \sqrt{\left(\frac{\textrm{d}x}{\textrm{d}\theta}\right)^2 + \left(\frac{\textrm{d}y}{\textrm{d}\theta}\right)^2} \textrm{d} \theta \\
&=& \int_{\theta_1}^{\theta_2} (1 + 4\pi^2 \theta^2) \textrm{d} \theta \\
&=& (\theta_2 - \theta_1) + \frac 43 \pi^2 (\theta_2^3 - \theta_1^3).
\end{eqnarray*}
Therefore, taking the origin to have $\theta=0,$ one has to pick $\theta_1,$ $\theta_2,$ and $\theta_3$ such that
\begin{eqnarray*}
\textrm{1)}  \ && \theta_1 + \frac 43 \pi^2 \theta_1^3 < \frac 12, \\
\textrm{2)} \ && (\theta_2 - \theta_1) + \frac 43 \pi^2 (\theta_2^3 - \theta_1^3) < \frac 12, \\
\textrm{3)} \ && (\theta_3 - \theta_2) + \frac 43 \pi^2 (\theta_3^3 - \theta_2^3) < \frac 12.
\end{eqnarray*}
The inequalities 1), 2), and 3) can be satisfied by taking  $\theta_1 = 1/16, $ $\theta_2 = 1/8,$ and $\theta_3 = 1/4.$ This choice gives the following three points on the spiral:
$$\lambda_1 = (\frac{1}{16} \cos \frac{\pi}{8}, \frac{1}{16} \sin \frac{\pi}{8}) = (0.06, 0.02),$$
$$\lambda_2 = (\frac{1}{8} \cos \frac{\pi}{4}, \frac{1}{8} \sin \frac{\pi}{4}) = (0.09, 0.09),$$ and
$$\lambda_3 = (\frac{1}{4} \cos \frac{\pi}{2}, \frac{1}{4} \sin \frac{\pi}{2}) = (0, 1/4).$$ Thus $ X = \{e_{\lambda_1}, e_{\lambda_2}, e_{\lambda_3}\}$ is a Fourier frame for $\mathrm{span}\{e_{\lambda_1}, e_{\lambda_2}, e_{\lambda_3}\}.$

For the purpose of implementation, to obtain the symmetric approximation, one can  discretize the ball $B(0, 1/4)$ by changing into polar coordinates and look at the rectangle $\{(r, \theta): 0\leq r \leq 1/4, 0 \leq \theta \leq 2\pi\}.$ Each side of the rectangle is then divided into $N$ subintervals, partitioning it into $N^2$ rectangles. The exponential functions from the set $X$ are then evaluated at $N^2$ grid-points, taking one point from each small rectangle and thus obtaining a vector $v_i$ of length $N^2$ for each $e_{\lambda_i}, \ i = 1, 2, 3.$ Treating the synthesis operator $F$ of $X$ as the matrix $[F]$ whose columns are $v_i;$ such a matrix will be of size $N^2$ by $3.$ After computing the polar decomposition of $[F]$ using Matlab, the resulting discretized Parseval frame $\{u_i\}_{i = 1}^3$ is considered as the symmetric approximation of the above Fourier frame.

Suppose one is interested in reconstructing a function $f$ in $\mathrm{span}\{e_{\lambda_1}, e_{\lambda_2}, e_{\lambda_3}\}.$ First $f$ is converted into a vector $[f]$ of size  $N^2$  by evaluating it at the $N^2$ points on the rectangular grid above. Then $f$ is reconstructed at the $N^2$ points as
$$\tilde{[f]} = \sum_{j = 1}^3 \langle [f], u_i \rangle u_i.$$ The results are shown in Figures \ref{fig:SamptaFig1} and \ref{fig:SamptaFig2} for the reconstruction of $f = e_{\lambda_1}$ and $f = e_{\lambda_1} - 2 e_{\lambda_2} + e_{\lambda_3}$, respectively. Only the real part of the original and the reconstructed functions are plotted. Also, for clarity of reading the figures, only a certain number of points are plotted instead of all the $N^2$ points.
\begin{figure}[h]
\center{
\includegraphics[height = 6cm]{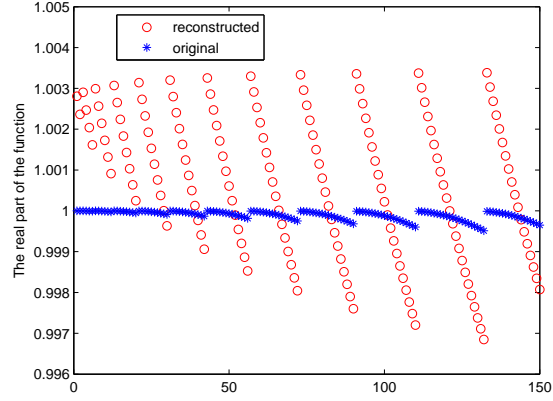}
}
\caption{Reconstruction of the function $f = e_{\lambda_1}$ using $N = 50$.}
\label{fig:SamptaFig1}
\end{figure}
\begin{figure}[h]
\center{
\includegraphics[height = 6cm]{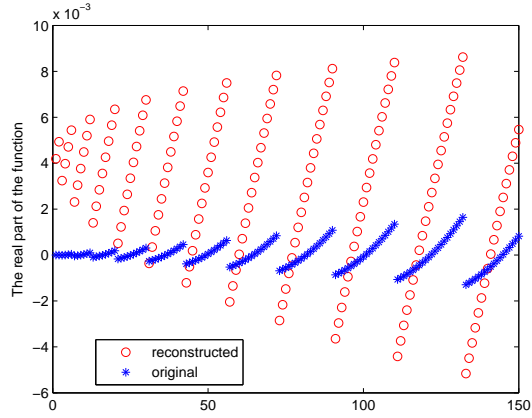}
}
\caption{Reconstruction of the function $f = e_{\lambda_1} - 2e_{\lambda_2} + e_{\lambda_3}$ using $N = 50$.}
\label{fig:SamptaFig2}
\end{figure}
\end{example}
\begin{example}\label{Example:4}
Let $\lambda_1 = 3 + \frac{1}{3}, \lambda_2 = 4 + \frac{1}{4}, \lambda_3 = 5 + \frac{1}{5}, \lambda_4 = 6 + \frac{1}{6}$  and let
$f_1 = e^{2 \pi i \lambda_1 x}, f_2 = e^{2 \pi i \lambda_2 x}, f_3 = e^{2 \pi i \lambda_3 x}, f_4 = e^{2 \pi i \lambda_4 x}.$ Consider the frame
$\{f_1, f_2, f_3, f_4\}$ for a subspace of $L^2([-1/2,1/2])$. Denote this subspace by $\mathcal{H}.$ Starting from the linearly independent set $\{f_1, f_2, f_3, f_4\}$ that spans $\mathcal{H},$ one can construct an orthonormal basis $\{e_1, e_2, e_3, e_4\}$ for $\mathcal{H}$ as done in Example \ref{Example:1}. The resulting orthonormal basis can be written as

\begin{equation*}
\left[  \begin{array}{c} e_1 \\ e_2 \\ e_3 \\ e_4 \end{array}  \right]  =
    \left[ \begin{array}{ccccc}
             1 & 0 & 0 & 0 \\
             -c_{21} & 1 & 0 & 0 \\
             c_{21}\theta - c_{31} & - \theta & 1 & 0 \\
             -c_{41} + \delta c_{31} + (\gamma - \theta\delta)c_{21} & -\gamma + \theta\delta & -\gamma & 1
             \end{array}  \right]
      \left[ \begin{array}{c} f_1 \\ f_2 \\ f_3  \\ f_4\end{array} \right],
\end{equation*}
where
\[c_{ij} = \textrm{sinc}(\lambda_i - \lambda_j),  1 \le i,j \le 4, \] and
\[ \textrm{sinc}(x) \equiv \frac{\sin(\pi x)}{\pi x}, \ \theta = \frac{c_{32} - c_{21}c_{31}}{1 - c_{21}^2}, \ \gamma = \frac{c_{42} - c_{21}c_{41}}{1 - c_{21}^2}, \ \delta = \frac{-c_{43} + \theta c_{42} - (c_{21}\theta - c_{31})c_{41}}{1 - 2\theta c_{32} + \theta^2 - (c_{31} - c_{21}\theta)^2}.  \]
If the matrix above is denoted by $M$ then the synthesis operator of the frame $\{f_1, f_2, f_3, f_4\}$ is $T = (M^{-1})^{\ast}.$ The polar decomposition of $T$ gives rise to a Parseval frame $\{g_1, g_2, g_3, g_4\}$ which in this case is an orthonormal basis for $\mathcal{H}.$

Now consider $\mathcal{H}_1 = \textrm{span}\{f_1, f_2\}$ and $\mathcal{H}_2 = \textrm{span}\{f_3, f_4\}.$ Using the Gram-Schmidt process, an orthonormal basis $\{e_1, e_2\}$ can be obtained for $\mathcal{H}_1$  as
\begin{equation*}
\left[  \begin{array}{c} e_1 \\ e_2  \end{array}  \right]  =
   M_1
      \left[ \begin{array}{c} f_1 \\ f_2 \end{array} \right],
\end{equation*}
where
\[
M_1 =  \left[ \begin{array}{cc}
             1 & 0  \\
             -c_{21} & 1
             \end{array}  \right].
             \]
Similarly, an orthonormal basis for $\mathcal{H}_2$ is $\{u_1, u_2\}$ that is given by
\begin{equation*}
\left[  \begin{array}{c} u_1 \\ u_2  \end{array}  \right]  =
   M_2
      \left[ \begin{array}{c} f_3 \\ f_4 \end{array} \right],
\end{equation*}
where
\[
M_2 =  \left[ \begin{array}{cc}
             1 & 0  \\
             -c_{43} & 1
             \end{array}  \right].
             \]
Note that $\{e_1, e_2, u_1, u_2\}$ forms a basis for $\mathcal{H}$ but is not an ONB. The synthesis operator of the frame $\{f_1, f_2\}$ is $T_1= (M_1^{-1})^{\ast}$ whereas  the synthesis operator of the frame $\{f_3, f_4\}$ is $T_2 = (M_2^{-1})^{\ast}.$ Taking the polar decomposition of $T_1$ and $T_2$ gives Parseval frames for $\mathcal{H}_1$ and $\mathcal{H}_2,$ respectively, and these can be denoted by $\{x_1, x_2\}$ and $\{y_1, y_2\},$ respectively. Then it can be checked that $\{x_1, x_2, y_1, y_2\}$ is a Parseval frame for $\mathcal{H}.$ In fact, if $W_1$ and $W_2$ are the partial isometries coming from the polar decompositions of $T_1$ and $T_2$ respectively, then the matrices of both $W_1$ and $W_2$ are of size 2 by 2 and the synthesis operator of the frame $\{x_1, x_2, y_1, y_2\}$ is given by
\begin{equation*}
P = \left[
\begin{array}{cc}
W_1 & \mathbf{0} \\
\mathbf{0} & W_2
\end{array}
\right]
\end{equation*}
where $\mathbf{0}$ is the 2 by 2 zero matrix.
Since $W_1$ and $W_2$ are partial isometries, one gets
\begin{equation*}
PP^{\ast} = \left[
\begin{array}{cc}
W_1 & \mathbf{0} \\
\mathbf{0} & W_2
\end{array}
\right]\left[
\begin{array}{cc}
W_1^{\ast} & \mathbf{0} \\
\mathbf{0} & W_2^{\ast}
\end{array}
\right] = \left[
\begin{array}{cc}
W_1 W_1^{\ast}& \mathbf{0} \\
\mathbf{0} & W_2 W_2^{\ast}
\end{array}
\right] = \left[
\begin{array}{cc}
I & \mathbf{0} \\
\mathbf{0} & I
\end{array}
\right].
\end{equation*}
However, the Parseval frame $\{x_1, x_2, y_1, y_2\}$ is different from the Parseval frame $\{g_1, g_2, g_3, g_4\}.$
\end{example}
Example \ref{Example:4} leads to the following proposition.
\begin{proposition}\label{Prop:1}
Let $X = \{f_1, f_2, \ldots, f_m\}$ be a frame for an $n$-dimensional Hilbert space $\mathcal{H}.$ Consider $K$ subframes $X_k = \{f_{j_1},\ldots, f_{j_{m_k}} \}$ such that $\sum_{k=1}^K m_k = m$ and $\bigcap_k X_k = \phi.$ Suppose that the subframe $X_k$ is a frame for a Hilbert subspace $\mathcal{H}_k \subseteq \mathcal{H}.$ Let each subspace $\mathcal{H}_k$ have dimension $n_k$ so that $\sum_{k=1}^K n_k = n.$ Using Theorem \ref{thm_Frank_Paulsen}, one can get a Parseval frame $X^{\textrm{Par}}$ for $\mathcal{H}$ from the frame $X$ and also a Parseval frame $X_k^{\textrm{Par}}$ for the each subspace $\mathcal{H}_k,$ from the corresponding frame $X_k.$ Then $\bigcup_k X_k^{\textrm{Par}}$ is also a Parseval frame for $\mathcal{H},$ however, this could be different from $ X^{\textrm{Par}}.$ The two Parseval frames will coincide if $\bigoplus \mathcal{H}_k = \mathcal{H}$ and for each $k,$ $\mathcal{H}_k$ is in the orthogonal complement of each $\mathcal{H}_j, j \neq k.$
\end{proposition}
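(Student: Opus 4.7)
The plan is to exploit the block structure that the synthesis operator inherits when the subspaces $\mathcal{H}_k$ are mutually orthogonal, together with the uniqueness of the polar decomposition.

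To verify that $\bigcup_k X_k^{\textrm{Par}}$ is a Parseval frame, write $X_k^{\textrm{Par}}=\{x_i^{(k)}\}$ and let $P_k$ denote the orthogonal projection of $\mathcal{H}$ onto $\mathcal{H}_k$. Since each $x_i^{(k)}\in\mathcal{H}_k$, one has $\langle y,x_i^{(k)}\rangle=\langle P_k y,x_i^{(k)}\rangle$ for every $y\in\mathcal{H}$, and the Parseval property of $X_k^{\textrm{Par}}$ for $\mathcal{H}_k$ then gives $\sum_i|\langle y,x_i^{(k)}\rangle|^2=\|P_k y\|^2$. Summing over $k$ reduces the Parseval identity $\sum_{k,i}|\langle y,x_i^{(k)}\rangle|^2=\|y\|^2$ to $\sum_k\|P_k y\|^2=\|y\|^2$, which holds precisely when the $\mathcal{H}_k$ form an orthogonal direct sum decomposition of $\mathcal{H}$.

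For the equality of the two Parseval frames under this orthogonality, I would fix an ONB of $\mathcal{H}$ obtained by concatenating ONBs $\{e_j^{(k)}\}_{j=1}^{n_k}$ of each $\mathcal{H}_k$; by Lemma \ref{lemma_ONB_ParsevalFrame}, $X^{\textrm{Par}}$ does not depend on this choice. After reordering $X$ so that the vectors of each $X_k$ are contiguous, the synthesis operator $T$ of $X$ becomes block-diagonal with $k$th block equal to the synthesis operator $T_k$ of $X_k$, because any $f\in X_k\subseteq\mathcal{H}_k$ has vanishing coefficients along the ONB of $\mathcal{H}_j$ whenever $j\ne k$. Hence $T^*T$ is block-diagonal with blocks $T_k^*T_k$, so uniqueness of the positive square root makes $|T|$ block-diagonal with blocks $|T_k|$, and uniqueness of the polar decomposition forces $W$ to be block-diagonal with blocks $W_k$ satisfying $T_k=W_k|T_k|$. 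Reading off the frame $W^T\cdot\{e_i\}$ in block form, the $k$th block produces $W_k^T\cdot\{e_j^{(k)}\}$, which is exactly $X_k^{\textrm{Par}}$; hence $X^{\textrm{Par}}=\bigcup_k X_k^{\textrm{Par}}$.

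The main obstacle is that when some $T_k$ is not injective, $W$ is determined by the polar decomposition only on the range of $|T|$. This is handled by observing that $\mathrm{range}(|T|)$ decomposes as the orthogonal sum of $\mathrm{range}(|T_k|)$, so $W$ is uniquely determined block by block on its initial space and extends by zero on the complement without altering the columns of $W$ used in forming $X^{\textrm{Par}}$.
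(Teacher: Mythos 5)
Your argument for the coincidence of the two Parseval frames under the orthogonality hypothesis is essentially the paper's: both fix the concatenated ONB of the $\mathcal{H}_k$ (invoking Lemma \ref{lemma_ONB_ParsevalFrame} for independence of that choice), observe that the synthesis operator of $X$ then becomes block-diagonal with blocks the synthesis operators of the $X_k$, and use uniqueness of the positive square root and of the polar decomposition to conclude that the partial isometry splits as $W=\bigoplus_k W_k$, so the blocks reproduce the $X_k^{\textrm{Par}}$. Your closing remark about the non-injective case, where $W$ is only determined on the range of $|T|$, is a refinement the paper does not make explicit.

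Where you genuinely depart from the paper is in verifying the Parseval property of $\bigcup_k X_k^{\textrm{Par}}$. The paper asserts this with no orthogonality assumption, by writing the synthesis operator of the union as the block-diagonal matrix $W$ of (\ref{eq:3}) and computing $WW^{*}=I$; but that computation reads coefficients with respect to the concatenation of the ONBs of the $\mathcal{H}_k$, which is an ONB of $\mathcal{H}$ only when the subspaces are mutually orthogonal (the paper itself notes in Example \ref{Example:4} that $\{e_1,e_2,u_1,u_2\}$ is a basis but not an ONB). Your direct computation $\sum_{k,i}|\langle y,x_i^{(k)}\rangle|^2=\sum_k\|P_k y\|^2$ makes the issue explicit: the union is Parseval precisely when $\sum_k\|P_k y\|^2=\|y\|^2$ for all $y$, i.e.\ precisely when $\mathcal{H}=\bigoplus_k\mathcal{H}_k$ orthogonally. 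A two-vector check in $\mathbb{R}^2$ with $\mathcal{H}_1=\mathrm{span}\{(1,0)\}$ and $\mathcal{H}_2=\mathrm{span}\{(\cos\alpha,\sin\alpha)\}$ confirms that the first assertion of the proposition fails without orthogonality. So your proof is correct for what it establishes, and it shows that the orthogonality hypothesis is needed already for the first claim of the proposition, not only for the coincidence claim; you should state that hypothesis up front rather than only in the last sentence.
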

%
\begin{proof}
(i) The subframe $X_k = \{f_{j_1},\ldots, f_{j_{m_k}} \}$ contains a basis $B_k = \{f_{i_1},\ldots, f_{i_{n_k}}\}$ of $\mathcal{H}_k$ that gives an orthonormal basis through the Gram-Schmidt process. This can be denoted by $\{e_{i_1}, \ldots, e_{i_{n_k}}\}.$
$$
\left[
\begin{array}{c}
f_{j_1} \\
\vdots \\
f_{j_{m_k}}
\end{array}
\right]
 = M_k \left[
\begin{array}{c}
e_{i_1} \\
\vdots \\
e_{i_{n_k}} \\
\end{array}
\right].
$$
The transformation matrix, $M_k,$ is an $ m_k \times n_k$ matrix. Note that the rows of the matrix $M_k$
are the coefficients of the subframe elements in $X_k$ in terms of the orthonormal basis $\{e_{i_1}, \ldots, e_{i_{n_k}}\}.$ Thus, the synthesis operator of the sub-frame is $M_k^T.$ The polar decomposition of $M_k^T$ gives rise to a Parseval frame for the subspace $\mathcal{H}_k \subseteq \mathcal{H}.$ Let $W_k$ be the partial isometry associated with the polar decomposition of $M_k^T.$ The matrix of $W_k$ is of size $m_k \times n_k$ and its columns are the coefficients of the Parseval frame $X_k^{\textrm{Par}}$ with respect to the ONB $\{e_{i_1}, \ldots, e_{i_{n_k}}\}.$ Therefore, the synthesis operator of the frame $\bigcup_{k = 1}^K X_k^{\textrm{Par}}$ is
\begin{equation} \label{eq:3}
W =  \left[
\begin{array}{cccc}
W_1 & \mathbf{0} & \cdots & \mathbf{0} \\
\mathbf{0} & W_2 & \cdots & \mathbf{0} \\
\vdots & \vdots & \ddots & \vdots \\
\mathbf{0} & \mathbf{0} & \cdots & W_K
\end{array}
\right]
\end{equation}
and
$$
WW^* = \left[
\begin{array}{cccc}
W_1 W_1^* & \mathbf{0} & \cdots & \mathbf{0} \\
\mathbf{0} & W_2W_2^* & \cdots & \mathbf{0}\\
\vdots & \vdots & \ddots & \vdots\\
\mathbf{0} & \mathbf{0} & \cdots & W_KW_K^*
\end{array}
\right] = \left[
\begin{array}{cccc}
I & \mathbf{0} & \cdots & \mathbf{0} \\
\mathbf{0} & I & \cdots & \mathbf{0}\\
\vdots & \vdots & \ddots & \vdots \\
\mathbf{0} & \mathbf{0} & \cdots & I
\end{array}
\right] = I.
$$
This shows that $\bigcup_{k = 1}^K X_k^{\textrm{Par}}$ is a Parseval frame. The fact that this need not be the same as $X^{\textrm{Par}}$ can be seen in Example \ref{Example:4}.

(ii) Assuming that $\bigoplus \mathcal{H}_k = \mathcal{H}$ and for each $k,$ $\mathcal{H}_k$ is in the orthogonal complement of each $\mathcal{H}_j, j \neq k,$ it has to be shown that the two Parseval frames, $\bigcup_{k = 1}^K X_k^{\textrm{Par}}$ and $X^{\textrm{Par}},$ are the same. Under this assumption, $E = \bigcup_{k = 1}^K \{e_{j_1}, e_{j_2}, \ldots, e_{j_{n_k}}\},$ the union of orthonormal basis of each $\mathcal{H}_K,$ is an orthonormal basis of $\mathcal{H}.$ Keeping the same notation as in (i), the synthesis operator of the frame $\bigcup_{k = 1}^K X_k = X,$ by considering the coefficients with respect to the ONB $E,$ is
$$
M = \left[
\begin{array}{cccc}
M_1^T & \mathbf{0} & \cdots & \mathbf{0}\\
\mathbf{0} & M_2^T & \cdots  &  \mathbf{0} \\
\vdots & \vdots  & \ddots & \vdots \\
\mathbf{0} & \mathbf{0} & \cdots & M_K^T
\end{array}
\right],
$$
where $\mathbf{0}$ stands for the zero matrix. Due to Lemma \ref{lemma_ONB_ParsevalFrame}, the polar decomposition of $M$ will give rise to the Parseval frame $X^{\textrm{Par}}.$ However, in the polar decomposition of $M = W|M|,$ $W$ is as given in (\ref{eq:3}). As observed in (i), the columns of $W$ form the Parseval frame $\bigcup_{k = 1}^K X_k^{\textrm{Par}}.$ This shows that in this case $X^{\textrm{Par}} = \bigcup_{k = 1}^K X_k^{\textrm{Par}}.$

\end{proof}
One might want to compare the two Parseval frames referred to in Proposition \ref{Prop:1}. Due to Theorem \ref{thm_Frank_Paulsen}, the Parseval frame  $X^{\textrm{Par}}$ is the symmetric approximation of the frame $X$ in $\mathcal{H}.$ Since the symmetric approximation of a given frame is unique, the Parseval frame $\bigcup_k X^{\textrm{Par}},$ when different from $X^{\textrm{Par}},$ is not a symmetric approximation of $X.$ However, getting $\bigcup_k X^{\textrm{Par}}$ requires fewer computations than obtaining  $X^{\textrm{Par}}.$  This can be seen as follows.

 The polar decomposition can be computed using the singular value decomposition (SVD).  Recall that if $A$ is any $m$ by $n$ matrix, then the SVD of $A$ is given by $A = U \Sigma V^{\ast}$, where $U$ and $V$ are unitary matrices, $U$ is  $m$ by $m, V$ is $n$ by $n$, and $\Sigma$ is a matrix whose only non-zero elements are along the $(i,i)$ entries, where $1 \leq i \leq m,$ assuming that $m \leq n .$ To compute $\Sigma$ and $V$, the number of flops (floating point operations) required is $4mn^2 + 8n^3$.  Therefore if the matrix $A$ (with $n$ columns) is split into two matrices (each with $n/2$ columns), and the computation is performed on each of the two sub-matrices, the number of flops is reduced to $2mn^2 + n^3$. For a discussion on the computation complexity for computing the SVD, see Golub and van Loan \cite{Golub1996}.
%
%
\section{Approximation} \label{approx_err_est}
If the Hilbert space is infinite dimensional as in $L^2(B(0, R)),$ then a frame for such a space can be constructed by following Example \ref{SpiralFrames}. As already mentioned, the type of frames discussed in Example \ref{SpiralFrames} is of interest due to applications in medical imaging.
However, for implementation purposes, one can only use finitely many terms from the infinite sum in the reconstruction formula (\ref{eq:recon}). Without loss of generality, in what follows, the index set is taken to be $\mathbb{N}$ even though the calculations would hold for any countable set.
Let $\Lambda = \{\lambda_n\}_{n = 1}^{\infty}$ be a sequence such that $\{e_{\lambda_n} \}_{n = 1}^{\infty}$ is a frame for
$L^2(B(0, R))$ where $e_{\lambda_n}(x) = e^{2\pi i
\lambda_n \cdot x}$ i.e., any $f \in L^2(B(0, R))$ can be written
as $f = \displaystyle \sum_{n=1}^{\infty} \left< f, e_{\lambda_n}\right>
S^{-1} e_{\lambda_n},$ where $S$ is the frame operator. This can be written as
\begin{eqnarray*}
f&=& \sum_{n = 1}^{\widetilde{N}} \langle f, e_{\lambda_n} \rangle S^{-1} e_{\lambda_n} + \sum_{n = \widetilde{N} + 1}^{\infty} \langle f, e_{\lambda_n} \rangle S^{-1} e_{\lambda_n} \\
&=& \widetilde{f} + f_{\epsilon}.
\end{eqnarray*}
Note that $\widetilde{f} = \sum_{n = 1}^{\widetilde{N}} \langle f, e_{\lambda_n} \rangle S^{-1} e_{\lambda_n}$ belongs to the space $\mathcal{H}_1 = \textrm{span}\{S^{-1}e_{\lambda_1}, \ldots, S^{-1}e_{\lambda_{\widetilde{N}}} \} = \textrm{span}\{e_{\lambda_1}, \ldots, e_{\lambda_{\widetilde{N}}} \}.$ The function $\widetilde{f}$ can be considered as an approximation of $f.$ Using the technique described in Section \ref{Sec:ParsevalFrames}, one can get a Parseval frame $\{g_1, \ldots, g_{\widetilde{N}}\}$ for the subspace $\mathcal{H}_1$ and $\widetilde{f}$ can be written as
$$\widetilde{f} = \sum_{i=1}^{\widetilde{N}} \langle f, g_i \rangle g_i$$ where the coefficients $\{\langle f, g_i \rangle\}_{i=1}^{\widetilde{N}}$ can be obtained from linear combinations of the elements in $\{\langle f, e_{\lambda_i} \rangle\}_{i=1}^{\widetilde{N}}.$ The error in this approximation is given by $f_{\epsilon} = f - \widetilde{f}.$
This section gives various estimates of such approximation error by considering different spaces of functions.
\subsection{Functions in $C^{(k)}$}
\subsubsection{Approximation error in one dimension}
Let $\Lambda = \{\lambda_n\}_{n = 1}^{\infty}$ be a sequence of reals such that $\{e_{\lambda_n} \}_{n = 1}^{\infty}$ is a frame for
$L^2(-R, R).$ Suppose that only $\tilde{N}$ terms are used to reconstruct $f$. Let $\tilde{f} = \displaystyle \sum_{n=1}^{\tilde{N}}
\left< f, e_{\lambda_n}\right> S^{-1} e_{\lambda_n}.$ An estimate of the error incorporated in truncating the sum is given in the following.
\begin{lemma} \label{lem:1}
Given $f \in C^{(k)} \cap L^2(-R, R).$ Assume that $f$ and $f^{(m)},$ $m = 1, \ldots, k$ vanish at $\pm R.$ Then for a given
$\lambda \in \widehat{\mathbb{R}},$
\begin{equation}
|\widehat{f}(\lambda)| \leqslant  \frac{1}{(2\pi |\lambda|)^k}
\|f^{(k)}\|_{L^1(-R, R)}. \label{eq7}
\end{equation}
\end{lemma}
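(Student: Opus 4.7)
The plan is to prove the estimate by repeated integration by parts, exploiting the vanishing of $f$ and its derivatives at the endpoints $\pm R$ to kill the boundary terms at each stage.

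First I would write out the Fourier transform explicitly as
\[
\widehat{f}(\lambda) = \int_{-R}^{R} f(x) e^{-2\pi i x \lambda}\, dx,
\]
noting that the integral extends only over $(-R,R)$ because, although $f$ is defined on a larger domain, the hypothesis that $f \in L^2(-R, R)$ (together with the vanishing at $\pm R$) makes the restriction the natural object. Integration by parts once with $u = f(x)$ and $dv = e^{-2\pi i x \lambda} dx$ produces a boundary term $\bigl[f(x) e^{-2\pi i x \lambda} / (-2\pi i \lambda)\bigr]_{-R}^{R}$, which vanishes since $f(\pm R) = 0$, leaving
\[
\widehat{f}(\lambda) = \frac{1}{2\pi i \lambda} \int_{-R}^{R} f'(x) e^{-2\pi i x \lambda}\, dx.
\]

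Next I would iterate this procedure $k$ times. At each of the intermediate stages the boundary contribution involves $f^{(m)}(\pm R)$ for some $1 \leq m \leq k-1$, and by hypothesis each of these vanishes. After $k$ steps one obtains the clean identity
\[
\widehat{f}(\lambda) = \frac{1}{(2\pi i \lambda)^k} \int_{-R}^{R} f^{(k)}(x) e^{-2\pi i x \lambda}\, dx.
\]
Taking absolute values and using $|e^{-2\pi i x \lambda}| = 1$ and $|(2\pi i \lambda)^k| = (2\pi |\lambda|)^k$ then yields the claimed bound
\[
|\widehat{f}(\lambda)| \leq \frac{1}{(2\pi |\lambda|)^k} \int_{-R}^{R} |f^{(k)}(x)|\, dx = \frac{1}{(2\pi |\lambda|)^k} \|f^{(k)}\|_{L^1(-R, R)}.
\]

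There is no genuine obstacle here; the only subtlety is keeping track of the boundary terms and confirming that each vanishes under the stated hypotheses. The assumption $f \in C^{(k)}$ ensures that all the integration-by-parts steps are legitimate, and the assumption that $f, f', \ldots, f^{(k)}$ all vanish at $\pm R$ is precisely what is needed for the induction to go through without leaving residual boundary contributions. A short formal induction on the number of integration-by-parts steps (base case $k=0$ being the trivial bound $|\widehat{f}(\lambda)| \leq \|f\|_{L^1}$, which the argument actually improves upon for $k \geq 1$) completes the proof.
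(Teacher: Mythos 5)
Your proposal is correct and follows essentially the same route as the paper: integrate by parts $k$ times, using the vanishing of $f, f', \ldots, f^{(k-1)}$ at $\pm R$ to eliminate the boundary terms, then take absolute values. Your observation that only the derivatives up to order $k-1$ are actually needed at the boundary is a minor (and accurate) refinement of the stated hypotheses.
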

\begin{proof}
Using integration by parts $k$ times and the fact that $f$ and $f^{(m)}, \ m =1, \ldots, k-1,$ vanish at $\pm R$ we have
\begin{eqnarray}
\widehat{f}(\lambda) &=& \int_{-R}^R f(t) e^{-2\pi i \lambda
t}dt \nonumber \\
&=& \frac{1}{2\pi i \lambda}\int_{-R}^R f'(t) e^{-2\pi i \lambda
t} dt \nonumber \\
&=& \vdots \nonumber \\
&=& \frac{1}{(2\pi i \lambda)^k} \int_{-R}^R f^{(k)}(t) e^{-2 \pi i
\lambda t} dt. \nonumber
\end{eqnarray}
Therefore,
\begin{eqnarray}
|\widehat{f}(\lambda)| &\leqslant& \frac{1}{(2\pi |\lambda|)^k}
\int_{-R}^R |f^{(k)}(t)| dt \nonumber \\
&=& \frac{1}{(2\pi |\lambda|)^k} \|f^{(k)}\|_{L^1(-R, R)}. \nonumber
\end{eqnarray}
\end{proof}
\begin{theorem}\label{thm:error_bound}
 Let $f \in C^{(k)} \cap L^2(-R, R)$ and  $\Lambda = \{\lambda_n\}_{n = 1}^{\infty}$ be a sequence of reals such that $\{e_{\lambda_n} \}_{n = 1}^{\infty}$ is a frame for
$L^2(-R, R)$ where $e_{\lambda_n}(x) = e^{2\pi i
\lambda_n x}.$ Then $\|f -
\tilde{f}\|_{L^2(-R, R)} \leqslant
\frac{\sqrt{2R}}{A}\frac{\|f^{(k)}\|_{L^1(-R,R)}}{(2
\pi)^k}\frac{1}{P},$ where $A$ is a lower frame bound and $P$ is a constant that depends on $k$ and the number of terms $\tilde{N}$ used to obtain $\tilde{f}.$
\end{theorem}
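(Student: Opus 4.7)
The plan is to bound $\|f - \tilde f\|_{L^2(-R,R)}$ by combining the crude triangle inequality applied to the tail series with the pointwise Fourier decay estimate already supplied by Lemma \ref{lem:1}. Writing the error explicitly as
$$f_{\epsilon} \;=\; f - \tilde f \;=\; \sum_{n=\tilde N+1}^{\infty} \langle f, e_{\lambda_n}\rangle \, S^{-1} e_{\lambda_n},$$
the triangle inequality in $L^2(-R,R)$ gives
$$\|f_\epsilon\|_{L^2(-R,R)} \;\leq\; \sum_{n=\tilde N+1}^{\infty} |\langle f, e_{\lambda_n}\rangle| \, \|S^{-1} e_{\lambda_n}\|.$$

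Next I would estimate the two factors in each summand separately. Since $A$ is a lower frame bound, the frame operator $S$ is self-adjoint, positive, and satisfies $S \geq A\cdot I$, so the operator norm estimate $\|S^{-1}\| \leq 1/A$ holds on $L^2(-R,R)$. Combined with the elementary computation $\|e_{\lambda_n}\|_{L^2(-R,R)} = \sqrt{2R}$, this produces $\|S^{-1} e_{\lambda_n}\| \leq \sqrt{2R}/A$, accounting for both the $\sqrt{2R}$ numerator and the $1/A$ factor that appear in the theorem. For the Fourier coefficient, identifying $f$ with a function on $\mathbb{R}$ supported in $[-R,R]$ gives $\langle f, e_{\lambda_n}\rangle = \widehat f(\lambda_n)$, and applying Lemma \ref{lem:1} yields
$$|\widehat f(\lambda_n)| \;\leq\; \frac{\|f^{(k)}\|_{L^1(-R,R)}}{(2\pi |\lambda_n|)^k}.$$

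Assembling these estimates produces
$$\|f - \tilde f\|_{L^2(-R,R)} \;\leq\; \frac{\sqrt{2R}}{A} \cdot \frac{\|f^{(k)}\|_{L^1(-R,R)}}{(2\pi)^k} \cdot \sum_{n=\tilde N+1}^{\infty} \frac{1}{|\lambda_n|^k},$$
and I would then simply define
$$P \;:=\; \Bigl(\sum_{n=\tilde N+1}^{\infty} |\lambda_n|^{-k}\Bigr)^{-1},$$
which depends only on $k$, the truncation parameter $\tilde N$, and the underlying sequence $\Lambda$, matching the form claimed in the theorem.

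The one point requiring genuine care is convergence of the tail sum $\sum_{n>\tilde N} |\lambda_n|^{-k}$: the frame hypothesis alone does not force $|\lambda_n|$ to grow, but in the uniformly discrete setting that motivates Example \ref{SpiralFrames} (and the Beurling Covering Theorem), $|\lambda_n|$ grows at least linearly, so the series converges whenever $k$ is chosen large enough relative to the dimension. I would also note that the invocation of Lemma \ref{lem:1} silently relies on the boundary-vanishing conditions $f^{(m)}(\pm R) = 0$ for $m = 0,1,\dots,k-1$, which ought to be read as implicit in the theorem's $C^{(k)}$ hypothesis on the bounded interval.
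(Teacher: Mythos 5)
Your proposal is correct and follows essentially the same route as the paper: tail decomposition, triangle inequality, $\|S^{-1}\|\leq 1/A$, $\|e_{\lambda_n}\|_{L^2(-R,R)}=\sqrt{2R}$, and Lemma \ref{lem:1} for the Fourier decay. The only difference is the last step, where the paper evaluates the tail sum via the comparison $\sum_{n>\tilde N}|\lambda_n|^{-k}\leq 2\int_{\tilde N+1}^{\infty}x^{-k}\,dx=\frac{2}{(k-1)(\tilde N+1)^{k-1}}$ (which silently assumes $|\lambda_n|$ grows at least like $n$), so your caveats about convergence of the tail sum and about the implicit boundary-vanishing hypotheses needed for Lemma \ref{lem:1} are well taken and apply equally to the paper's own argument.
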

\begin{proof}
\begin{eqnarray}
\|f - \tilde{f}\|_{L^2(-R, R)} & =& \left\Vert\sum_{n=1}^{\infty} \left< f,
e_{\lambda_n}\right> S^{-1} e_{\lambda_n} - \sum_{n=1}^{ \tilde{N}}
\left< f, e_{\lambda_n}\right> S^{-1}
e_{\lambda_n}\right\Vert_{L^2(-R, R)}
\nonumber \\
&=& \left\|\sum_{n > \tilde{N}} \left< f,
e_{\lambda_n}\right> S^{-1} e_{\lambda_n}\right\|_{L^2(-R, R)}
\nonumber \\
&=& \left\|\sum_{n > \tilde{N}} \widehat{f}(\lambda_n)
S^{-1} e_{\lambda_n}\right\|_{L^2(-R, R)}
\nonumber \\
&\leqslant& \sum_{n > \tilde{N}} |\widehat{f}(\lambda_n)|
\|S^{-1}\| \|e_{\lambda_n}\|_{L^2(-R,R)} \nonumber \\
&\leqslant& \frac 1A \sum_{n > \tilde{N}}
|\widehat{f}(\lambda_n)| \|e_{\lambda_n}\|_{L^2(-R,R)} \nonumber
\\
&=& \frac{\sqrt{2R}}{A} \sum_{n > \tilde{N}}
|\widehat{f}(\lambda_n)|. \nonumber
\end{eqnarray}
Using Lemma \ref{lem:1} to bound $|\widehat{f}(\lambda_n)|$ one
gets
\begin{eqnarray}
\|f - \tilde{f}\|_{L^2(-R, R)} &\leqslant& \frac{\sqrt{2R}}{A}
\sum_{n > \tilde{N}} |\widehat{f}(\lambda_n)| \leqslant
\frac{\sqrt{2R}}{A} \frac{\|f^{(k)}\|_{L^1(-R,R)}}{(2 \pi)^k}
2 \int_{\tilde{N}+1}^{\infty} \frac{1}{x^k} dx \nonumber \\
&=& \frac{\sqrt{2R}}{A} \frac{\|f^{(k)}\|_{L^1(-R,R)}}{(2\pi)^k}
\frac{2}{(k-1)(\tilde{N}+1)^{(k-1)}}. \label{eq8}
\end{eqnarray}
Given $\epsilon$ one can choose $\tilde{N}$ such that
$\frac{\sqrt{2R}}{A} \frac{\|f^{(k)}\|_{L^1(-R,R)}}{(2 \pi)^k}
\frac{2}{(k-1)(\tilde{N}+1)^{(k-1)}} < \epsilon.$
\end{proof}
\begin{example}
Let $\lambda_n = n.$ The system of exponentials $\{e_n\}_{n \in \mathbb{Z}}$ is
an orthonormal basis for the Hilbert space $L^2[0, 1].$ So it is a tight frame with frame bound $A=1.$ Suppose $f \in L^2[0,1]$ and $k=2$ i.e., $f$ is at least twice differentiable, then,
\begin{equation} \label{eq11}
\|f - \tilde{f}\|_{L^2[0,1]} \leqslant  \frac{\|f^{(2)}\|_{L^1[0,1]}}{(2\pi)^2}
\frac{2}{\tilde{N} + 1}.
\end{equation}
(\ref{eq11}) is comparable to the result obtained in \cite{Chr03} (page 71) which says that if the derivative $f' \in L^2(0,1),$ then for all $N \in \mathbb{N},$
\begin{equation} \nonumber
\left| f(x) -  \sum_{|n| < N}\left<f, e_n\right>e_n(x)\right| \leqslant \frac{1}{\sqrt{2}\pi}\frac{1}{\sqrt{N}}\left(\int_0^1|f'(t)|^2dt\right)^{1/2}
\end{equation}
\end{example}
\subsubsection{Approximation error in higher dimensions}
Let $\Lambda$ be some index set and let $\{e_{\lambda}\}_{\lambda \in \Lambda}$ be a frame for $L^2(B(0, R)).$ Then for any $f \in L^2(B(0,R))$
$$f(x) = \sum_{\lambda \in \Lambda}\left<f, e_{\lambda}\right>S^{-1}e_{\lambda}(x).$$ For some $\hat{R},$ let $$\tilde{f}(x) = \sum_{\lambda \in \Lambda \cap B(0, \hat{R})} \left<f, e_{\lambda}\right> S^{-1}e_{\lambda}(x).$$
\begin{eqnarray}
\|f - \tilde{f}\|_{L^2(B(0, R))} & =& \left\Vert\sum_{\lambda \in \Lambda} \left< f,
e_{\lambda}\right> S^{-1} e_{\lambda} - \sum_{\lambda \in \Lambda \cap B(0, \hat{R})}
\left< f, e_{\lambda}\right> S^{-1}
e_{\lambda}\right\Vert_{L^2(B(0, R))}
\nonumber \\
&=& \left\|\sum_{\lambda \in \Lambda \cap B(0, \hat{R})^c} \left< f,
e_{\lambda}\right> S^{-1} e_{\lambda}\right\|_{L^2(B(0, R))}
\nonumber \\
&=& \left\|\sum_{\lambda \in \Lambda \cap B(0, \hat{R})^c} \widehat{f}(\lambda)
S^{-1} e_{\lambda}\right\|_{L^2(B(0, R))}
\nonumber \\
&\leqslant& \sum_{\lambda \in \Lambda \cap B(0, \hat{R})^c} |\widehat{f}(\lambda)|
\|S^{-1}\| \|e_{\lambda}\|_{L^2(B(0,R))} \nonumber \\
&\leqslant& \frac{\sqrt{\textrm{vol}(B(0,R))}}{A} \sum_{\lambda \in \Lambda \cap B(0,\widehat{R})^c}
|\widehat{f}(\lambda)|. \nonumber
\end{eqnarray}
Let $\lambda \in \widehat{\mathbb{R}}^n,$ $\lambda = (\lambda_1, \lambda_2, \ldots, \lambda_n).$ Pick $j$ such that $|\lambda_j| = \sup_{1 \leqslant i \leqslant n} |\lambda_i|.$ Then clearly $\lambda_j \neq 0.$ Denoting the time variable by $t = (t_1, t_2, \ldots, t_n)$ and integrating by parts $k$ times with respect to $t_j$ (under the assumption that $f$ is differentiable that many times with respect to the chosen variable and also that $f$ and its derivatives vanish on the boundary of $B(0, R)$),
\begin{eqnarray*}
\widehat{f}(\lambda) &=& \int_{B(0, R)} f(t) e^{-2 \pi i (\lambda_1 t_1 + \cdots + \lambda_n t_n)} dt_1 dt_2 \ldots dt_n \\
&=& \frac{1}{(-2\pi i \lambda_j)^k} \int_{B(0, R)} \frac{\partial^k f}{\partial t_j^k} \ dt_1 dt_2 \ldots dt_n
\end{eqnarray*}
and therefore
\begin{eqnarray*}
|\widehat{f}(\lambda)| &\leqslant&
 \frac{1}{(2\pi)^k |\lambda_j|^k} \int_{B(0, R)} \left|\frac{\partial^k f}{\partial t_j^k}\right| \ dt_1 dt_2 \ldots dt_n.
\end{eqnarray*}
Using the fact that $|\lambda| \leqslant \sqrt{n}|\lambda_j|,$
\begin{equation} \label{fou.est}
|\widehat{f}(\lambda)| \leqslant \left(\frac{\sqrt{n}}{2\pi}\right)^k \frac{1}{|\lambda|^k} \int_{B(0, R)} \left|\frac{\partial^k f}{\partial t_j^k}\right| \ dt_1 dt_2 \ldots dt_n.
\end{equation}
This gives
\begin{eqnarray*}
\|f - \tilde{f}\|_{L^2(B(0, R))} &\leqslant& \frac{\sqrt{\textrm{vol}(B(0,R))}}{A} \sum_{\lambda \in \Lambda \cap B(0, \hat{R})^c} \left(\frac{\sqrt{n}}{2\pi}\right)^k \frac{1}{|\lambda|^k} \int_{B(0, R)} \left|\frac{\partial^k f}{\partial t_j^k}\right| \ dt_1 dt_2 \ldots dt_n \\
&\leqslant& \frac{\sqrt{\textrm{vol}(B(0,R))}}{A} \left(\frac{\sqrt{n}}{2\pi}\right)^k \int_{B(0, R)} \left|\frac{\partial^k f}{\partial t_j^k}\right| \ dt_1 dt_2 \ldots dt_n \sum_{\lambda \in \Lambda \cap B(0, \hat{R})^c} \frac{1}{|\lambda|^k}.
\end{eqnarray*}
%
%
\subsection{Functions with discontinuities}
In this section, another estimate on the approximation error is given. This kind of estimate applies to any function in $L^2$ and therefore can be used even when the underlying function has discontinuities.  These estimates are derived using the techniques developed in the seminal paper by Landau (1967) \cite{Landau1967}.  
%
%

\begin{lemma}
Let $\Lambda = \{ \lambda_k \}_{k \in \mathbb{Z}}$ be a sequence with $\delta = \inf \{ | \lambda_m - \lambda_n | \colon m \neq n \} > 0$. Let $E \subseteq \mathbb{R}^{d}$ be a compact set. There exists a constant $B > 0$ such that for all $F \in \textrm{PW}_{E},$ 
$$\sum_{k \in \mathbb{Z}}|F(\lambda_k)| \leq B\left(\int_{\mathbb{R}^{d}} |F(\zeta)|^2 \textrm{d}\zeta\right)^{1/2}.$$
The constant $B$ depends on $\delta$ and $E.$
\end{lemma}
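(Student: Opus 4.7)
The plan is to interpret the pointwise values $F(\lambda_k)$ as inner products in $L^2(E)$ and then to use the uniform discreteness of $\Lambda$ together with a pointwise Plancherel--Polya bound to control their sum.

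First, because $F\in\textrm{PW}_E$ with $E$ compact, I would write $F=\widehat{g}$ where $g=F^{\vee}\in L^2(\mathbb{R}^d)$ is supported in $E$; Plancherel gives $\|g\|_{L^2(E)}=\|F\|_{L^2(\mathbb{R}^d)}$, and each pointwise evaluation becomes the inner product $F(\lambda_k)=\langle g, e_{\lambda_k}\rangle_{L^2(E)}$ with $e_{\lambda_k}(x)=e^{2\pi i\lambda_k\cdot x}$. This reduces the left side of the lemma to $\sum_k|\langle g, e_{\lambda_k}\rangle_{L^2(E)}|$.

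Next, I would establish the standard pointwise Plancherel--Polya estimate
\[ |F(\lambda)|^2 \le C_0(E,\delta)\int_{B(\lambda,\delta/2)}|F(\zeta)|^2\,\textrm{d}\zeta, \]
valid for all $F\in\textrm{PW}_E$ and all $\lambda\in\mathbb{R}^d$. Its derivation convolves $F$ against a Schwartz reproducing kernel for $\textrm{PW}_E$, obtained by inverse--Fourier--transforming a smooth bump identically $1$ on $E$, and then applies Cauchy--Schwarz locally. Summing over $k$ and invoking the disjointness of the balls $B(\lambda_k,\delta/2)$ that is guaranteed by the separation $\delta>0$ yields the Bessel--type bound
\[ \sum_{k\in\mathbb{Z}}|F(\lambda_k)|^2 \le C_0(E,\delta)\,\|F\|_{L^2(\mathbb{R}^d)}^2. \]

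The main obstacle, and in my view the only one, lies in passing from this $\ell^2$ inequality to the $\ell^1$ bound actually displayed in the lemma. Naive Cauchy--Schwarz would require the counting sum $\sum_k 1$ to be finite, which fails for infinite $\Lambda$. Concretely, taking $d=1$, $E=[-1/2,1/2]$, $\Lambda=\mathbb{Z}$, and $g\in L^2(E)$ with Fourier coefficients $c_k=|k|^{-1}$ for $k\ne 0$ gives an $F\in\textrm{PW}_E$ with $\|F\|_2^2=\sum|c_k|^2<\infty$ yet $\sum_k|F(k)|=\sum|c_k|=\infty$, which obstructs the inequality as written. To push the proof through, one would need either to replace the left side by $\sum_k|F(\lambda_k)|^2$ (the classical Plancherel--Polya/Landau inequality \cite{Landau1967}, which is in fact what the subsequent approximation estimates in this paper actually use), or to insert a weight such as $(1+|\lambda_k|)^{-\alpha}$ with $\alpha>d/2$ and then apply Cauchy--Schwarz to the $\ell^2$ bound, or to restrict to $F$ with additional smoothness as in Lemma \ref{lem:1} so that the tail of $\{|F(\lambda_k)|\}$ is summable.
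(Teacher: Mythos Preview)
Your diagnosis is correct, and in fact the lemma as stated is false; your counterexample with $d=1$, $E=[-1/2,1/2]$, $\Lambda=\mathbb{Z}$, and $F(k)=1/|k|$ is valid and shows that no constant $B$ can bound $\sum_k|F(\lambda_k)|$ by $\|F\|_2$ for all $F\in\textrm{PW}_E$.

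The paper's own proof follows essentially the same route you sketch for the $\ell^2$ bound: it picks $h$ with $\widehat{h}$ supported in $B(0,\delta/2)$ and $|h|\equiv 1$ on $E$, sets $g=f/h$ so that $F=G\ast\widehat{h}$, and applies Cauchy--Schwarz to get
\[
|F(\lambda_k)|\le\|\widehat{h}\|_2\left(\int_{B(\lambda_k,\delta/2)}|G(\zeta)|^2\,\textrm{d}\zeta\right)^{1/2}.
\]
The error is in the next step: the paper asserts that summing these over $k$ and using disjointness of the balls $B(\lambda_k,\delta/2)$ yields $\sum_k|F(\lambda_k)|\le\|\widehat{h}\|_2\,\|G\|_2$. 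But disjointness only gives $\sum_k a_k\le\|G\|_2^2$ for $a_k=\int_{B(\lambda_k,\delta/2)}|G|^2$; to get what is claimed one would need $\sum_k a_k^{1/2}\le(\sum_k a_k)^{1/2}$, and that inequality runs the wrong way. What the argument actually proves is the Plancherel--Polya $\ell^2$ inequality
\[
\sum_{k\in\mathbb{Z}}|F(\lambda_k)|^2\le\|\widehat{h}\|_2^2\,\|F\|_2^2,
\]
exactly as you said. Your three suggested fixes (replace $\ell^1$ by $\ell^2$; insert a summable weight before Cauchy--Schwarz; or impose extra smoothness so the tail is summable) are the right repairs, and the first is the one the paper's subsequent applications actually need.
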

\begin{proof}

Let $h \in L^2(\mathbb{R})$ such that the support of $\widehat{h} \subseteq B(0, \delta / 2),$ and $|h(x)| = 1$ for all $x \in E$.\\
Recall that $F \in PW_{E}$ means $F \in L^2(\mathbb{\widehat{R}}^d),$ and $ F = \widehat{f},$ where $f$ vanishes outside of set $E$. \\
  Given $F \in PW(E)$, construct $G = \widehat{g} $ such that $g(x) = \frac{f(x)}{h(x)}.$  
Since $f(x) = 0$ when $x \notin E$, so $g(x) = 0$ when $x  \notin E$, i.e. $G \in PW_{E}$.
Now, $f(x) = g(x) \ h(x)$ implies that $F = G \ast \widehat{h}$.  Hence,
\[ F(\gamma) = \int_{\mathbb{\widehat{R}}^{d}} G(\zeta) \cdot \widehat{h}(\gamma - \zeta) \ d\zeta = \int_{|\gamma - \zeta| < \delta / 2 } G(\zeta) \cdot \widehat{h}(\gamma - \zeta) \ d\zeta \]
It follows that by taking absolute values and by the Cauchy-Schwartz inequality,
\[ |F(\gamma) | = \int_{\mathbb{\widehat{R}}^{d}} |G(\zeta)| \cdot |\widehat{h}(\gamma - \zeta) | \ d\zeta \leq \left( \int_{|\gamma - \zeta| < \delta / 2 } |G(\zeta)|^2  \ d\zeta \right)^{1/2} \cdot \| \widehat{h} \|_2.\]
In particular, this means that
\[ |F(\lambda_k)| = \| \widehat{h} \|_2 \cdot  \left( \int_{|\zeta - \lambda_k| < \delta / 2 } |G(\zeta)|^2  \ d\zeta \right)^{1/2}.\]
Since $| \lambda_j - \lambda_k| \geq \delta$ for all $j \neq k$, the above inequality implies that
\begin{equation}\label{eqn:five_estimate}
\sum_{k \in \mathbb{Z}} |F(\lambda_k)| \leq \| \widehat{h} \|_2 \cdot  \left( \int_{\mathbb{\widehat{R}}^{d} } |G(\zeta)|^2  \ d\zeta \right)^{1/2}.
\end{equation}
But $|h(x)| \geq 1$ for all $x \in E$, so $|g(x)| \leq |f(x)|$ for all $x \in E$, which means $\| G \|_2 \leq \| F \|_2$, since $F, G \in PW_{E}$.  It follows from (\ref{eqn:five_estimate}) that
\[ \sum_{k \in \mathbb{Z}} |F(\lambda_k)| \leq \| \widehat{h} \|_2 \cdot \| F \|_2 .\]
Since the function $h$ depends on the set $E$ and on $\delta$, so $B \equiv \|\widehat{ h} \|_2$ depends on $E$ and $\delta$.
\end{proof}
The proof of Theorem \ref{thm:error_bound} shows that the approximation error $\| f - \tilde{f} \|$ depends on $\sum_{n > N} | \widehat{f}(\lambda_{n}) |$.  
The argument in the proof of the above lemma can be modified to obtain an upper bound on $\sum_{k=N+1}^{\infty} |F(\lambda_k)| + \sum_{k= - \infty}^{-(N+1)}  |F(\lambda_k)| .$
\begin{theorem}
Given the same notation as above, where $f \in L^2(E)$ and $\{\lambda_k \}_{k \in \mathbb{Z}}$ is a uniformly discrete sequence. Then
$$\sum_{|k| > N} |\widehat{f}(\lambda_k)| \leq \|\hat{h}\|_2 \left(\int_{|\zeta | \geq N\delta} |G(\zeta)|^2 \textrm{d} \zeta\right)^{1/2},$$
where for all $x \in E,$ $|g(x)| \leq |f(x)|$ and $G = \hat{g}, F = \hat{f}.$
\end{theorem}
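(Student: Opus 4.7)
The plan is to mirror the proof of the preceding lemma, restricting the summation to the tail indices $|k|>N$ and exploiting a geometric localization. The starting point, already established in that lemma via the identity $F = G \ast \widehat{h}$ and the support condition $\mathrm{supp}(\widehat{h}) \subseteq B(0, \delta/2)$, is the local pointwise bound
$$|F(\lambda_k)| \leq \|\widehat{h}\|_2 \left(\int_{|\zeta - \lambda_k| < \delta/2} |G(\zeta)|^2 \, d\zeta\right)^{1/2},$$
valid for every index $k$. I would take this as a black box and feed it into a tail argument.

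Two geometric observations then enter. First, the uniform discreteness hypothesis $|\lambda_j - \lambda_k| \geq \delta$ for $j \neq k$ makes the balls $B(\lambda_k, \delta/2)$ pairwise disjoint, which is precisely what justified the aggregation step in the lemma. Second, under the natural indexing in which $|\lambda_k|$ is non-decreasing in $|k|$ (with $\lambda_0$ nearest the origin), the same separation forces $|\lambda_k| \geq (N + \tfrac12)\delta$ whenever $|k|>N$, so that
$$\bigcup_{|k|>N} B(\lambda_k, \delta/2) \;\subseteq\; \{\zeta : |\zeta| \geq N\delta\}.$$

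The conclusion then comes from reapplying the exact aggregation step of the lemma's proof, now with the index set $\{k : |k|>N\}$ in place of $\mathbb{Z}$ and the ambient integration region replaced by the above disjoint union. Using the inclusion into $\{|\zeta|\geq N\delta\}$ to enlarge the region of integration gives
$$\sum_{|k|>N} |\widehat{f}(\lambda_k)| \leq \|\widehat{h}\|_2 \left(\int_{|\zeta|\geq N\delta} |G(\zeta)|^2 \, d\zeta\right)^{1/2},$$
which is the desired inequality. The hypothesis $|g(x)| \leq |f(x)|$ on $E$ is not used explicitly in this step but is retained as a standing assumption because it yields $\|G\|_2 \leq \|F\|_2$, permitting downstream conversion of the right-hand side into a genuine tail estimate for $F$ itself.

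The main obstacle I anticipate is the indexing convention for $\{\lambda_k\}_{k \in \mathbb{Z}}$ when $d>1$. The symbol ``$|k|>N$'' is unambiguous only once one commits to an ordering (say by modulus, with $\lambda_0$ nearest the origin), and the clean inclusion $B(\lambda_k, \delta/2)\subseteq \{|\zeta|\geq N\delta\}$ depends on such a convention together with the packing estimate that turns the discrete separation $\delta$ into the radial lower bound $(N+\tfrac12)\delta$. If a coordinate-free statement is preferred, the cleanest fix is to restate the theorem with the geometric hypothesis $|\lambda_k|\geq (N+\tfrac12)\delta$ in place of $|k|>N$, at which point the argument above goes through verbatim.
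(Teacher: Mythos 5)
Your argument follows the paper's proof in all essentials: both rest on the local bound $|F(\lambda_k)|\le\|\widehat h\|_2\bigl(\int_{|\zeta-\lambda_k|<\delta/2}|G(\zeta)|^2\,d\zeta\bigr)^{1/2}$ from the preceding lemma, the pairwise disjointness of the $\delta/2$-balls coming from uniform discreteness, and the observation that for $|k|>N$ those balls lie in $\{|\zeta|\ge N\delta\}$. The one real difference is that the paper splits the tail into $k>N$ and $k<-N$, bounds each piece by an integral over a half-line $[\lambda_N,\infty)$ resp.\ $(-\infty,\lambda_{-N}]$, and then merges the two square roots into one in its last displayed line; your single aggregation over the union of disjoint balls is cleaner, since it avoids that merge (which the paper writes as an equality even though $\sqrt a+\sqrt b\ge\sqrt{a+b}$, so the paper's final step does not actually follow as written) and it survives in dimension $d>1$, where the half-line integrals make no sense. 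Your closing remark about the indexing is also on target: the paper tacitly assumes the $\lambda_k$ are monotonically indexed on the line with $\lambda_0$ near the origin, so that $\lambda_N\ge N\delta$ and $\lambda_{-N}\le -N\delta$, and your proposed geometric hypothesis makes that explicit. One caveat you inherit from the lemma rather than introduce: the aggregation from $\sum_k\bigl(\int_{B_k}|G|^2\bigr)^{1/2}$ to $\bigl(\int_{\bigcup_k B_k}|G|^2\bigr)^{1/2}$ requires $\sum_k\sqrt{a_k}\le\sqrt{\sum_k a_k}$ for nonnegative $a_k$, and that inequality runs the other way; disjointness only licenses this step for the squared quantities, so both your proof and the paper's would be airtight only with $\sum_{|k|>N}|\widehat f(\lambda_k)|^2$ on the left-hand side (the Plancherel--P\'olya form of the estimate).
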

\begin{proof}
By the estimate in (\ref{eqn:five_estimate}), and the fact that the sequence $\{ \lambda_k \}$ is uniformly discrete,
\begin{align*}
& \sum_{k=N+1}^{\infty} |F(\lambda_k)| + \sum_{k= - \infty}^{-(N+1)}  |F(\lambda_k)| \\
\leq & \left(  \int_{\lambda_N}^{\infty} | G(\zeta) |^2 \ d\zeta \right)^{1/2} \cdot \| \widehat{h} \|_2 + \left(  \int_{- \infty}^{\lambda_{-N}} | G(\zeta) |^2 \ d\zeta \right)^{1/2} \cdot \| \widehat{h} \|_2 \\
\leq & \left(  \int_{N \delta}^{\infty} | G(\zeta) |^2 \ d\zeta \right)^{1/2} \cdot \| \widehat{h} \|_2 + \left(  \int_{- \infty}^{N \delta} | G(\zeta) |^2 \ d\zeta \right)^{1/2} \cdot \| \widehat{h} \|_2 \\
= & \left( \int_{ | \zeta | \geq N \delta } | G(\zeta) |^2 \ d\zeta \right)^{1/2} \cdot  \| \widehat{ h } \|_2.
\end{align*}
\end{proof}
Note: Since $G = \widehat{g} \in L^2(\mathbb{R})$, so as $N \rightarrow \infty$, we have
\[ \left( \int_{ | \zeta | \geq N \delta  } | G(\zeta) |^2 \ d\zeta \right)^{1/2} \rightarrow 0,\]
and in particular, as $N \rightarrow \infty$,
\[ \sum_{ |k| > N} | \widehat{f}(\lambda_k) | \rightarrow 0. \]

%
%
\section{Discussion}\label{discussion}
Given a set of linearly independent vectors, the Gram-Schmidt orthogonalization process yields an orthogonal basis
that spans the same vector space as the given vectors.  One drawback to this procedure is that the orthogonal basis depends on the
ordering of the original set of vectors.  In the infinite-dimensional case, a natural question arises: is it always possible to find an orthogonal set
of vectors that is a symmetric approximation to a given set of vectors?  The precise nature of this question is specified below, where
the answer is shown to be negative.

Let $\mathcal{H}$ be an infinite-dimensional separable Hilbert space.
Let $\{f_j \}_{j=1}^{\infty}$ be a set of linearly independent vectors in $\mathcal{H}$.
Let $\{e_j\}_{j=1}^{\infty}$ be an ONB of $l_2$.  Define an operator $D \colon l_2 \to \mathcal{H}$ by $T(e_j) = f_j,$ for $1 \leq j < \infty$.  Assume that $T$ is a bounded operator and $I - |T|$ is a Hilbert-Schmidt operator. \\
One looks for an ONB $\{v_j\}$ of $\mathcal{H}$ such that if $\{u_j\}$ is any orthonormal set of vectors in $\mathcal{H}$, then
\[ \sum_{j=1}^{\infty} \| v_j - f_j \|^2 \leq \sum_{j=1}^{\infty} \| u_j - f_j \|^2. \]

\begin{lemma}
 Given the same notation as above.  Assume $\text{dim}((Ran \ T)^{\perp}) < \text{dim}(Ker \ T)$. \\
 Then it is impossible to have $\sum_{j=1}^{\infty} \| v_j - f_j \|^2 < \infty.$
 \end{lemma}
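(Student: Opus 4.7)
The plan is to reformulate the finiteness of $\sum_{j} \|v_j - f_j\|^2$ as a Hilbert--Schmidt condition on an operator difference and then deploy Fredholm theory to contradict the dimension hypothesis. Given an ONB $\{v_j\}_{j=1}^\infty$ of $\mathcal{H}$, I would define the unitary $V\colon \ell_2 \to \mathcal{H}$ by $Ve_j = v_j$; then
\[
\sum_{j=1}^\infty \|v_j - f_j\|^2 \;=\; \sum_{j=1}^\infty \|(V - T)\,e_j\|^2 \;=\; \|V - T\|_{\mathrm{HS}}^2,
\]
so the claim reduces to showing that $V - T$ is never Hilbert--Schmidt.

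To bring in the hypothesis on $|T|$, I would use the polar decomposition $T = W|T|$ to split
\[
V - T \;=\; (V - W) \,+\, W(I - |T|).
\]
Since $I - |T|$ is Hilbert--Schmidt by hypothesis and $W$ is bounded, the second summand is HS; therefore $V - T$ being HS would force $V - W$ to be HS as well, and in particular compact.

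Here is the key step. Since $V$ is unitary it is Fredholm of index $0$, and by the standard stability theorem for Fredholm operators under compact perturbations, $W = V - (V - W)$ would also be Fredholm of index $0$. But the polar decomposition gives $\ker W = \ker T$ and $(\operatorname{Ran} W)^\perp = (\operatorname{Ran} T)^\perp$ (the latter because $W$ has closed range $\overline{\operatorname{Ran} T}$), so index zero would force
\[
\dim(\ker T) \;=\; \dim\bigl((\operatorname{Ran} T)^\perp\bigr),
\]
contradicting the strict inequality assumed; the case $\dim(\ker T) = \infty$ is automatically excluded, since a Fredholm operator must have a finite-dimensional kernel. The one delicate point I expect is quoting the Fredholm stability theorem accurately and confirming the kernel/range identifications granted by the polar decomposition; after that the contradiction is immediate.
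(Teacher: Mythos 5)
Your proof is correct, and it runs on the same engine as the paper's --- stability of the Fredholm index under compact (here Hilbert--Schmidt) perturbation --- but it applies that engine to a different pair of operators via a different decomposition. The paper compares $T$ directly with the unitary $U$ ($Ue_j = v_j$): it uses the hypothesis that $I - |T|$ is Hilbert--Schmidt only to conclude $\dim(\ker T) < \infty$, combines this with the assumed inequality to see that $T$ is Fredholm with $\mathrm{ind}(T) = \dim(\ker T) - \dim\bigl((\mathrm{Ran}\,T)^{\perp}\bigr) > 0$, and then derives the contradiction from $\mathrm{ind}(U) = \mathrm{ind}(T)$ together with the fact that a unitary has index $0$. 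You instead strip off $|T|$ through the polar decomposition, writing $V - T = (V - W) + W(I - |T|)$, use the Hilbert--Schmidt hypothesis a second time (through the ideal property) to conclude that $V - W$ is Hilbert--Schmidt hence compact, and run the index argument on $W$ versus $V$; the identifications $\ker W = \ker T$ and $(\mathrm{Ran}\,W)^{\perp} = (\mathrm{Ran}\,T)^{\perp}$ (valid because a partial isometry has closed range $\overline{\mathrm{Ran}\,T}$) then convert $\mathrm{ind}(W) = 0$ into $\dim(\ker T) = \dim\bigl((\mathrm{Ran}\,T)^{\perp}\bigr)$, contradicting the strict inequality. What your route buys is that you never need to verify separately that $T$ itself is Fredholm, and the possibility $\dim(\ker T) = \infty$ is disposed of in one line; what the paper's route buys is that it invokes the polar decomposition only implicitly and gets the compactness of the perturbation $U - T$ directly from the assumed finiteness of $\sum_j \|v_j - f_j\|^2$. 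Both arguments are sound.
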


\begin{proof}
Suppose $\sum_{j=1}^{\infty} \| v_j - f_j \|^2 < \infty.$ \\
Let $U(e_j) = v_j,$ for $1 \leq j < \infty$.  Then $U \colon l_2 \to \mathcal{H}$ is an isometry, and so the kernel of $U$ is $\{0\}$.\\  Denote the Hilbert-Schmidt norm of any operator $A$ by $\| A\|_{HS}$.
\[ \|U - T \|_{HS}^2 = \sum_{j=1}^{\infty} \| (U - T) e_j \|^2 = \sum_{j=1}^{\infty} \| v_j - f_j \|^2 < \infty. \]
Thus the operator $U-T$ is a Hilbert-Schmidt operator and hence is a compact operator.  Since $I - |T|$ is Hilbert-Schmidt, so the kernel of $T$ is finite-dimensional.  The complement of the range of T is also finite-dimensional, by the assumption that $\text{dim}((Ran \ T)^{\perp}) < \text{dim}(Ker \ T)$.  This means that $T$ is a Fredholm operator.  Since $U-T$ is compact, $U = T + (U-T)$ is also Fredholm.  Moreover, Index of U = Index of T $> 0$.  Note that Index of T $> 0$ by assumption. \\

But Index of U = $\text{dim}(Ker \ U) - \text{dim}( (Ran \ U)^{\perp}) = 0 - \text{dim}( (Ran \ U)^{\perp})$. \\
This gives a contradiction, $ \ \text{dim}( (Ran \ U)^{\perp}) < 0$.\\

Hence if $\text{dim}((Ran \ T)^{\perp}) < \text{dim}(Ker \ T)$, then it is impossible for
  $\sum_{j=1}^{\infty} \| v_j - f_j \|^2 < \infty.$

  \end{proof}

Recall that if $\{f_k\}_{k=1}^{\infty}$ is a frame for a separable Hilbert space and $S$ is the frame operator, then the canonical tight frame is given by $\{S^{-1/2} f_k\}_{k=1}^{\infty}$.  Although it is not explicitly stated in the work of Frank, Paulsen, and Tiballi \cite{FPT2002}, it can be shown that the Parseval frame obtained using the symmetric approximation is the same as  the canonical tight frame.   To see why this is true, let $T$ be an operator, where $T = W |T|$ is the polar decomposition.  Since W is an isometry on the range of $|T|$, hence $T^{\ast} = |T| W^{\ast} = W^{\ast} W |T| W^{\ast}.$\\
Now, compute
\[T T^{\ast} = (W |T|)(W^{\ast} W |T| W^{\ast}) = (W |T| W^{\ast})(W |T| W^{\ast}) \]
and therefore $|T^{\ast}| = (T T^{\ast})^{1/2} = W |T| W^{\ast}$.  That means
\[ |T^{\ast}| W = (W |T| W^{\ast})W = (W |T|) (W^{\ast}W) = T.\]
So the tight frame obtained in the symmetric approximation is the same as the canonical tight frame if it can be demonstrated that $W(e_j) = S^{-1/2} f_j$, where the synthesis operator $T$ is defined by $T(e_j) = f_j$.  But by the calculations just performed,
\begin{align*}
 (TT^{\ast})^{1/2} W e_j & = T e_j \\
\text{or, }  S^{1/2} W e_j & = T e_j \\
\text{or, }  W e_j & = S^{-1/2} ( T e_j) \\
\text{or, }  W(e_j) & = S^{-1/2} f_j,
\end{align*}
which completes the demonstration.
%
%
\section{Conclusion}\label{conclusion}
In this paper, an explicit construction of a Parseval frame that is the symmetric approximation of a Fourier frame on a spiral has been considered. For the sake of applications, the focus is on Fourier frames on a spiral but the technique can be applied to any other frame. In the case of a finite frame, the Gram-Schmidt process is first used to get an ONB for the space spanned by the frame and then the polar decomposition of the matrix corresponding to the synthesis operator of the frame gives the required Parseval frame. The reconstruction of functions lying in the span of such Fourier frames on spirals has been studied. By using a Parseval frame that spans the same space as the original Fourier frame, the reconstruction avoids the need to compute the inverse of the frame operator of the original frame. Besides, the Parseval frame that is obtained by considering the symmetric approximation enables one to reconstruct a function by only using the measurements obtained from the original Fourier frame.

 In the case of an infinite dimensional Hilbert space, even after finding a Parseval frame, it is not possible to use an infinite frame and one can only use finitely many measurements. This leads to some approximation of the function to be reconstructed and results in approximation error. Such approximation error has also been estimated.
\section*{Acknowledgment}
The authors are immensely grateful to John Benedetto for being a constant source of inspiration and a mathematical role model. The second named author also wishes to express sincere gratitude to Doug Cochran and John McDonald for useful discussions on the topic. The first named author is supported by a postdoctoral fellowship from the Pacific Institute for the Mathematical Sciences. The second named author was partially supported by AFOSR Grant No. FA9550-10-1-0441.



\bibliographystyle{plain}
\bibliography{SpiralSamplingSAMPTARefs-3}
%
%
%

\vspace{0.2in}
Corresponding author: Enrico Au-Yeung\\
Department of Mathematics, 1984 Mathematics Road \\ University of British Columbia, Vancouver, B.C.  \ Canada, V6T 1Z2 \\
Email address: enricoauy@math.ubc.ca

\end{document}